\documentclass[11pt]{article}
\usepackage{amsmath, amssymb, amscd, amsthm, amsfonts}
\usepackage{graphicx}
\usepackage{hyperref}
\usepackage{authblk}
\usepackage{stmaryrd} 
\usepackage{tikz}
\usetikzlibrary{arrows.meta,decorations.pathmorphing,calc}

\title{A Sudakov Decomposition in Riemannian Manifolds with Positive Curvature}
\author{Zhengyao Huang}
\affil{Department of Mathematics, Durham University \\ \texttt{phqv76@durham.ac.uk}}

\newtheorem{theorem}{Theorem}
\newtheorem{maintheorem}{Theorem}
\newtheorem{lemma}[theorem]{Lemma}
\newtheorem{corollary}[theorem]{Corollary}

\newtheorem{defn}[theorem]{Definition}

\theoremstyle{remark}
\newtheorem{remark}[theorem]{Remark}

\begin{document}

\maketitle

\begin{abstract}
In this paper, we study Monge's problem on Riemannian manifolds $(M, g)$ with positive sectional curvature. Assuming that the source and target measures are absolutely continuous with respect to the Riemannian volume measure, we generalize a variational method from the Euclidean setting to establish the existence of a transport density and an explicit disintegration of measures along optimal rays. These results extend the approach of Bianchini-Caravenna to the Riemannian context.
\end{abstract}

\tableofcontents
\newpage

\section{Introduction}\label{section-introduction}
We study Monge's problem on Riemannian manifolds $(M,g)$ with positive sectional curvature, i.e.\ $\operatorname{Sec}(M) \geq K > 0$. We further assume that $\mu$ and $\nu$ are probability measures that are absolutely continuous with respect to the Riemannian volume measure $d\operatorname{Vol}_g$. Our focus is on the structure of the transport set and its disintegration along transport rays. The strategy we follow adapts a variational method introduced by Bianchini~\cite{bianchini2007eulerlagrange} and later refined by Caravenna~\cite{caravenna2011a}. Related disintegration techniques have also been used to study the graph of convex functions~\cite{caravenna2010a} and optimal transport with norm costs~\cite{2013arXiv1311.1918B}.

In the Euclidean setting for the $L^1$ cost $c(x,y)=\|x-y\|$, the (informal) content of the Sudakov theorem is the existence of a deterministic optimal transport map from an absolutely continuous source measure $\mu \in \mathcal{P}_{\mathrm{ac}}(\mathbb{R}^n)$ to an arbitrary target $\nu \in \mathcal{P}(\mathbb{R}^n)$. A common strategy is to reduce the $n$-dimensional problem to a family of one-dimensional problems along (essentially) irreducible transport rays, and then glue the resulting solutions to obtain a global transport map. However, Sudakov's original argument \cite{Sudakov1979GeometricPI} contains a gap: after disintegrating $\mu$ along transport rays, the resulting one-dimensional conditional measures need not be absolutely continuous with respect to $\mathcal{H}^1$. Ambrosio and Pratelli \cite{Ambrosio2003} pointed out this issue, provided a counterexample, and established existence via a stability argument based on $\Gamma$-convergence of the Kantorovich functional.

On Riemannian manifolds, Feldman and McCann~\cite{feldman2001monge} proved existence for compactly supported measures $\mu,\nu\in\mathcal{P}_{\mathrm{ac}}(M)$, and Figalli~\cite{Figalli2007} extended the theory to non-compact manifolds using Ma\~n\'e potentials for supercritical Lagrangians. It is also worth mentioning that Bianchini and Cavalletti generalized the Monge problem to geodesic spaces~\cite{BianchiniCavalletti2013}. In particular, using the $(K,N)$-measure contraction property, they obtained a derivation similar to that in this paper for the absolutely continuous pushforward map (see Lemma~\ref{lemma: Absolutely continuous pushforward}). Furthermore, dimensional reduction arguments have been used to prove various geometric inequalities on metric measure spaces; see~\cite{cavalletti2018overviewl1optimaltransportation,klartag2014needledecompositionsriemanniangeometry}.

\subsection{Monge problem}
Let $\mu,\nu\in\mathcal{P}(M)$ be two probability measures. For a Borel map $T:M\to M$, its pushforward acts on $\mu$ by
\[
(T_{\#}\mu)(A)=\mu\bigl(T^{-1}(A)\bigr)\qquad\text{for every Borel set }A\subseteq M.
\]
Given a cost function $c:M\times M\to\mathbb{R}$, the Monge problem is
\[
\mathbf{M}_c(\mu,\nu)
=\inf\left\{\int_M c\bigl(x,T(x)\bigr)\,d\mu(x):\ T_{\#}\mu=\nu\right\}.
\]
We seek not only the value of this infimum but also an optimal map $T$ attaining it.

Kantorovich introduced a relaxation of the Monge problem  by allowing general transport plans (couplings) between $\mu$ and $\nu$. These couplings are elements of
\[
\Pi(\mu,\nu)=\left\{\pi\in\mathcal{P}(M\times M):\ p_{\#}\pi=\mu,\ q_{\#}\pi=\nu\right\},
\]
and are used to define the Kantorovich functional
\[
\mathbf{K}_c[\pi]=\int_{M\times M} c(x,y)\,d\pi(x,y).
\]
The Kantorovich problem is to minimize $\mathbf{K}_c[\pi]$ over $\pi\in\Pi(\mu,\nu)$. If $T$ is a transport map and $\pi=(\mathrm{Id}\times T)_{\#}\mu$, then
\[
\int_M c\bigl(x,T(x)\bigr)\,d\mu(x)=\int_{M\times M} c(x,y)\,d\pi(x,y),
\]
so Monge's formulation is contained in Kantorovich's one. An important advantage is that, under mild assumptions, minimizers for the Kantorovich problem exist; one then studies when an optimal plan is induced by a map.

Throughout this paper we take $c(x,y)=d(x,y)$, where $d$ is the Riemannian distance,
\[
d(p,q)=\inf\left\{\int_0^T\sqrt{g_{\gamma(t)}\bigl(\dot\gamma(t),\dot\gamma(t)\bigr)}\,dt:\ \gamma\text{ is piecewise smooth, }\gamma(0)=p,\gamma(T)=q\right\}.
\]
In this case, the dual Kantorovich problem is
\[
\sup\left\{\int_M u\,d(\mu-\nu):\ u\in\mathrm{Lip}_1(M,d)\right\},
\]
where
\[
\mathrm{Lip}_1(M,d)=\left\{u:M\to\mathbb{R}:\ |u(x)-u(y)|\le d(x,y)\ \forall x,y\in M\right\}.
\]
Such 1-Lipschitz functions are called Kantorovich potentials.

\subsection{Main Theorem}
Let $u \in \operatorname{Lip}_1(M, d)$ be a Kantorovich potential. Its subdifferential $\partial u$ induces a family of transport rays. On a Riemannian manifold, these rays are geodesics oriented in the direction of mass transport. We denote the transport set by
\[
\mathcal{T}_{\mathrm{e}}:=\bigcup_{y \in \mathcal{S}} \mathcal{R}(y) \subset M,
\]
where $\mathcal{S}$ is a suitable transversal set (a set of representatives for the rays). Along these rays, there exists a Borel density function $D(t, y)$ describing the expansion or contraction (the Jacobian factor); this represents the area occupied by a unit mass during transport.
\begin{theorem}\label{thm: explicit disintegration of tr}
\emph{(Explicit disintegration of the transport set)}
For every $\varphi\in L^1(M,\mathrm{Vol}_g)$,
\[
\int_{\mathcal{T}_{\mathfrak{e}}} \varphi(x)\,d\operatorname{Vol}_g(x)
=\int_{\mathcal{S}}\left(\int^{u(a(y))}_{u(b(y))} \varphi\bigl(\exp_y[(t-u(y))\nabla u(y)]\bigr)\,D(t,y)\,dt\right) d\mathcal{H}^{n-1}(y),
\]
where $a(y)$ and $b(y)$ are the endpoints of the maximal transport ray through $y$.
\end{theorem}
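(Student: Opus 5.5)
We reduce the statement to a change of variables along the ray map. Consider
\[
\Phi(t,y)=\exp_y\bigl[(t-u(y))\nabla u(y)\bigr],\qquad y\in\mathcal S,\ t\in\bigl(u(b(y)),u(a(y))\bigr).
\]
This map is injective, since distinct transport rays of $\mathcal T_{\mathfrak e}$ do not intersect in their relative interiors (the standard nonbranching property for $d$-cyclically monotone sets on a Riemannian manifold). Because $u$ is $1$-Lipschitz and increases with unit speed along each ray, $u(\Phi(t,y))=t$, so $t$ is the level-set coordinate. The plan is to show that $\Phi_\#^{-1}(\mathrm{Vol}_g\llcorner\mathcal T_{\mathfrak e})$ is absolutely continuous with respect to $\mathcal L^1\otimes\mathcal H^{n-1}\llcorner\mathcal S$. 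We then define $D$ as its Radon--Nikodym density and prove that $D$ has the geometric meaning of a Jacobian. With this done, the formula follows from Fubini on $\mathcal S\times\mathbb R$ for $\varphi\ge0$, and then for general $\varphi\in L^1$ by splitting into positive and negative parts.

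\textbf{Step 1: a regular skeleton.} First I would cover $\mathcal T_{\mathfrak e}$, up to a $\mathrm{Vol}_g$-null set, by countably many sheaf sets. Each consists of rays crossing a fixed level set $\{u=t_0\}$ and extending at least $\varepsilon$ to either side of it. On such a set, $\nabla u$ exists on the transversal section $\mathcal S\cap\{u=t_0\}$. That section is countably $(n-1)$-rectifiable, being contained in a Lipschitz hypersurface (a level set of a $1$-Lipschitz function that is differentiable with unit gradient). The direction field $y\mapsto\nabla u(y)$ is Lipschitz on the portions where rays extend at least $\varepsilon$ in both directions. On the Riemannian side, this follows from the standard estimate that two geodesics which are length-minimizing on $[-\varepsilon,\varepsilon]$ and do not cross have initial velocities whose distance is controlled by $C_\varepsilon d(y,y')$. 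Positive curvature bounds the injectivity and conjugate radius in the relevant range via $\mathrm{Sec}\ge K$ and compactness (Bonnet--Myers). Consequently $\Phi$ is Lipschitz on each sheaf, and the area formula gives a Jacobian $J\Phi(t,y)$ with
\[
\int_{\Phi(\text{sheaf})}\varphi\,d\mathrm{Vol}_g=\int\!\!\int \varphi(\Phi(t,y))\,J\Phi(t,y)\,dt\,d\mathcal H^{n-1}(y).
\]

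\textbf{Step 2: the key obstacle — transversal absolute continuity.} The area formula alone only handles the part where $J\Phi>0$. The real difficulty, which is exactly Sudakov's gap, is to show that the part of $\mathcal T_{\mathfrak e}$ not covered by these Lipschitz charts, in particular neighbourhoods of ray endpoints $a(y),b(y)$, has volume zero. One also needs the conditional measures on rays to be genuinely absolutely continuous and not merely disintegrated abstractly. For this I would invoke the earlier absolutely continuous pushforward lemma (Lemma~\ref{lemma: Absolutely continuous pushforward}). It states that moving a set along rays from level $t_0$ to level $t$ maps $\mathcal H^{n-1}$-null sets to $\mathcal H^{n-1}$-null sets, with a quantitative Jacobi-field comparison factor from $\mathrm{Sec}\ge K$, of the form
\[
\Bigl(\tfrac{\sin(\sqrt K(\,\cdot\,))}{\sin(\sqrt K(\,\cdot\,))}\Bigr)^{n-1}.
\]
Applying it in both directions shows that $J\Phi>0$ a.e. It also shows that $\mathrm{Vol}_g$ of the set of endpoints, and of the rays of length zero, vanishes: take a Fubini slicing by level sets of $u$ (coarea with $|\nabla u|=1$) and note that on each level the endpoint set is transversally null. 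Letting $\varepsilon\to0$ in the countable cover then exhausts $\mathcal T_{\mathfrak e}$.

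\textbf{Step 3: gluing.} Set $D(t,y)=J\Phi(t,y)$ on each sheaf. This is consistent on overlaps because the representation via $\mathcal S$ is unique. $D$ is Borel, being a countable patching of Borel Jacobians. Summing the sheaf identities using disjointness, then applying monotone convergence, yields the theorem.
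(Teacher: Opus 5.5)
Your main line is correct, but it is a genuinely different route from the paper's. The paper never applies the area formula to the ray map. On each cylinder over a compact base $Z\subset\{u=u_0\}$ it defines the density $\tilde\alpha(t,y)$ as the Radon--Nikodym derivative of $(\Phi^{-t})_{\#}(\mathcal{H}^{n-1}\llcorner\Phi^tZ)$ with respect to $\mathcal{H}^{n-1}\llcorner Z$. Its existence and two-sided bounds come from the absolutely continuous pushforward lemma, which the paper proves by approximating $u$ with the finite-max potentials $u_I$ and a Jacobi-field comparison using $\operatorname{Sec}\ge K$. The local disintegration then follows from the coarea formula with $|\nabla u|=1$ and Fubini. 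The global formula follows by summing over cylinders, after discarding $\mathcal{T}_{\mathfrak e}\setminus\mathcal{T}$.

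You instead use the Feldman--McCann Lipschitz bound for $\nabla u$ on bases with an $\varepsilon$-margin; the paper cites this too, but only to interpret the expansion factor as a Jacobian. This makes $\Phi$ an injective Lipschitz map on $\mathbb{R}\times Z$, and you read off $D=J\Phi$ from the area formula. This is shorter and needs only compactness, not the comparison with $\mathfrak{s}_K$. In your route Sudakov's gap does not arise, because the area formula directly gives a density with respect to $\mathcal{L}^1\otimes\mathcal{H}^{n-1}\llcorner\mathcal{S}$. Moreover, for $\mathcal{H}^{n-1}$-a.e.\ $y$ the function $t\mapsto J\Phi(t,y)$ is a determinant of Jacobi fields along the ray, hence smooth in $t$. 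What the paper's route buys is the explicit two-sided bounds on $\tilde\alpha$ in terms of $\mathfrak{s}_K$ and the Lipschitz bound in $t$ used in Section~3. It is also the Bianchini--Caravenna method, which still works when ray directions are not Lipschitz.

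Several of your side claims need fixing:
\begin{enumerate}
\item Step~2 is not needed for the identity. For an injective Lipschitz map the area formula also holds on $\{J\Phi=0\}$, whose image is simply null. Positivity of $J\Phi$ matters only for the converse null-set statement.
\item Your derivation of $\operatorname{Vol}_g(\mathcal{T}_{\mathfrak e}\setminus\mathcal{T})=0$ is wrong as written. The endpoint set need not be $\mathcal{H}^{n-1}$-null on every level: parallel rays can all have their upper endpoints on one patch of a level set of positive area. At best this holds for a.e.\ level, and the pushforward lemma, which only compares interior levels $h^-<s\le t<h^+$, does not give even that. Simply cite Feldman--McCann's Lemma~24 (part (2) of the paper's structure theorem for $\mathcal{T}$), as the paper does.
\item The Lipschitz bound does not come from rays ``not crossing''; skew segments do not cross either. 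It comes from the potential: near $y$, $u$ is squeezed between $u(p)-d(\cdot,p)$ and $u(q)+d(\cdot,q)$, with $p,q$ the points of the ray at distance $\varepsilon$ above and below $y$. These barriers are uniformly semiconvex, resp.\ semiconcave, there.
\item $\operatorname{Sec}\ge K>0$ gives no lower injectivity-radius bound (lens spaces are a counterexample), and none is needed.
\item Glue using ray-disjoint bases, as the paper does with $\hat K$, rather than appealing to consistency on overlaps.
\end{enumerate}
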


This is a direct generalization of Caravenna's result that deals with ambient Euclidean space.

\section*{Acknowledgements}
I am deeply grateful to Prof.\ Wilhelm Klingenberg for his invaluable guidance and insightful suggestions throughout this work. I would also like to extend my sincere thanks to Prof.\ Luigi Ambrosio and Prof.\ Laura Caravenna for their thoughtful feedback and advice.
\section*{Statements and Declarations}
The author has no relevant financial or non-financial interests to disclose. No data was collected in the course of this research.

\section{Disintegration of transport sets}
In this section we introduce the transport sets following Caravenna \cite[p. 377]{caravenna2011a} and describe a decomposition into cylindrical sets that will be convenient later. We also recall some results of Feldman and McCann \cite{feldman2001monge}. Under the standing assumption $\operatorname{Sec}(M)\ge K>0$, the manifold $(M,g)$ is necessarily compact, by Bonnet--Myers theorem \cite{jost2017riemannian}.
\subsection{Elementary structure of the transport set}
We begin by studying the Borel measurability of the transport sets $\mathcal{T}$ and $\mathcal{T}_{\mathfrak{e}}$. We will also establish the Borel measurability of several multivalued functions.
\begin{defn}
Let $u$ be a Kantorovich potential on $M$. The outgoing set from $x\in M$ is
\[
\mathcal{P}(x):=\left\{z\in M:\ u(z)=u(x)-d(x,z)\right\},
\]
and the incoming set to $x\in M$ is
\[
\mathcal{P}^{-1}(x):=\left\{z\in M:\ u(z)=u(x)+d(x,z)\right\}.
\]
We define the (transport) ray through $x$ by
\[
\mathcal{R}(x):=\mathcal{P}(x)\cup\mathcal{P}^{-1}(x) \subset M.
\]
Along transport rays, $u$ decreases at the maximal rate. Moreover, on the transport set $\mathcal{T}$ the vector field $\nabla u$ points in the direction opposite to the transporting geodesics $c:[0,1]\to M$ (whenever $\dot c(t)\neq 0$). In particular,
\[
\nabla u(z_0)=-\frac{\dot c(t_0)}{\|\dot c(t_0)\|_{z_0}}.
\]
\end{defn}

\begin{defn}
Let $u$ be a Kantorovich potential on $M$. Define the set of optimal pairs
$$
\partial u:=\{(x, y) \in M \times M: u(x)-u(y)=d(x, y)\} .
$$
which is the sub-differential of a 1-Lipschitz potential. For any $(x, y) \in \partial u$, we denote by $\rrbracket x, y \llbracket$ an open geodesic segment connecting $x$ and $y$ (the relative interior of a geodesic with endpoints $x$ and $y$), and by $\llbracket x, y \rrbracket$ the corresponding closed geodesic segment. The transport set $\mathcal{T}$ is the union of all such open geodesic segments in $M$:
$$
\mathcal{T}:=\bigcup_{(x, y) \in \partial u} \rrbracket x, y \llbracket \;  \subset M.
$$
The transport set including endpoints is
$$
\mathcal{T}_{\mathfrak{e}}:=\bigcup_{(x, y) \in \partial u, x \neq y} \llbracket x, y \rrbracket \subset M.
$$
We also introduce the (possibly multivalued) endpoint maps along rays: for $y$ on a ray, let $a(y)$ denote the upper endpoints (where $u$ attains its maximum on that ray) and let $b(y)$ denote the lower endpoints (where $u$ attains its minimum on that ray).
\end{defn}

\begin{lemma}
(Lemma 9, \cite{feldman2001monge}) Let $\mathcal{R}_1$ and $\mathcal{R}_2$ be transport rays with $\mathcal{R}_1\neq\mathcal{R}_2$ and $\mathcal{R}_1\cap\mathcal{R}_2\neq\emptyset$. Then either
\begin{itemize}
    \item $\mathcal{R}_1 \cap \mathcal{R}_2 = \{p\}$ and $p$ is an endpoint (upper or lower), or
    \item $\mathcal{R}_1 \cap \mathcal{R}_2 = \{p,q\}$ where $p$ is an upper endpoint and $q$ is a lower endpoint.
\end{itemize}
In particular, an interior point of a transport ray cannot lie on any other transport ray.
\end{lemma}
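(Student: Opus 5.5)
The plan is to prove the lemma using only the $1$-Lipschitz property of $u$, the triangle inequality, and the fact that on a Riemannian manifold a curve realizing the distance between its endpoints is a minimizing geodesic, hence smooth. We parametrize each ray by arclength in the direction of decreasing $u$: $\mathcal{R}_i=\gamma_i([s_i^-,s_i^+])$ with $u(\gamma_i(s))=u(\gamma_i(s_i^-))-(s-s_i^-)$. The parameter is then an affine function of $u$ along $\gamma_i$, and any two points $x,z$ of the same ray satisfy $|u(x)-u(z)|=d(x,z)$.

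\textbf{Step 1 (no branching at interior points).} Let $p\in\mathcal{R}_1\cap\mathcal{R}_2$ be interior to $\mathcal{R}_1$. Then there are points $x_1,z_1\in\mathcal{R}_1$ with $u(x_1)>u(p)>u(z_1)$. Since $p\in\mathcal{R}_2$ and $\mathcal{R}_2\neq\mathcal{R}_1$, the ray $\mathcal{R}_2$ contains a point $w\neq p$ not on $\mathcal{R}_1$.
\begin{itemize}
\item If $u(w)<u(p)$, consider $u(x_1)-u(w)=[u(x_1)-u(p)]+[u(p)-u(w)]=d(x_1,p)+d(p,w)\ge d(x_1,w)$. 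The $1$-Lipschitz property gives equality. Hence the broken curve $x_1\to p\to w$ is length-minimizing, so it is a single smooth geodesic. It therefore coincides near $p$ with $\gamma_1$, and both segments continue from $p$ with the same velocity. Uniqueness of geodesics with given initial data forces $w\in\mathcal{R}_1$ (extending $\mathcal{R}_1$ if necessary and using maximality), which is a contradiction.
\item If $u(w)>u(p)$, the same argument with $z_1$ in place of $x_1$ applies.
\end{itemize}
By symmetry, $p$ is an endpoint of both rays. This gives the ``in particular'' clause.

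\textbf{Step 2 (counting intersection points).} By Step 1, $\mathcal{R}_1\cap\mathcal{R}_2\subseteq\{a_1,b_1\}$, the upper and lower endpoints of $\mathcal{R}_1$. So the intersection has at most two points. If it has two points, they are $a_1$ and $b_1$, and they must be distinct, so one is an upper endpoint and the other a lower endpoint, as claimed. We still need consistency on $\mathcal{R}_2$: $u$ restricted to $\mathcal{R}_2$ strictly decreases along its parametrization, and $u(a_1)>u(b_1)$. Hence $a_1$ has higher $u$-value on $\mathcal{R}_2$ as well. Since both are endpoints of $\mathcal{R}_2$, $a_1$ is its upper endpoint and $b_1$ its lower endpoint. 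This matches the labels, and the situation can occur, for example with two distinct minimizing geodesics between antipodal-type points on a positively curved manifold.

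\textbf{Main obstacle.} The delicate point is the geodesic-gluing argument in Step 1. We must know that rays are maximal, so that the geodesic continuing $\gamma_1$ beyond $p$ toward $w$ actually lies in $\mathcal{R}_1$; otherwise Step 1 only yields that $\mathcal{R}_2$ extends $\mathcal{R}_1$. I would handle this by defining rays as maximal sets on which $u$ has unit slope along a single geodesic, i.e.\ via $\mathcal{P}(x)\cup\mathcal{P}^{-1}(x)$. One must then also check that this set is a single geodesic segment whenever $x$ is interior; this follows from the same gluing argument applied to any two points of $\mathcal{P}(x)$ and $\mathcal{P}^{-1}(x)$.
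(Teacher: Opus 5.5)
Your proof is correct. The paper gives no proof of its own and only quotes Feldman--McCann, and your argument is the standard one behind that result. Unit slope plus the triangle inequality make the broken path through the common point a minimizing curve, so it cannot bend, and maximality of rays then rules out branching; Steps 1--2 yield exactly the stated dichotomy.

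You are also right to insist on maximal unit-slope segments. The paper's literal $\mathcal{R}(x)=\mathcal{P}(x)\cup\mathcal{P}^{-1}(x)$ can be far larger than a segment when $x$ is an endpoint. For example, with $u=-d(\cdot,N)$ on a round sphere, $\mathcal{R}(N)$ is the whole sphere and meets each meridian in a full meridian, so the lemma would fail under that reading. For interior $x$, as you indicate, it coincides with the maximal ray.
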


\begin{remark}
On a Riemannian manifold, the cut locus $\mathrm{Cut}(a)$ of a point $a\in M$ is the set of points $x\in M$ that can be joined to $a$ by more than one minimizing geodesic. In particular, if $a\in\mathcal{T}$, then any point of $\mathrm{Cut}(a)$ lies either in $\mathcal{T}_{\mathfrak{e}}\setminus\mathcal{T}$ or in $M\setminus\mathcal{T}_{\mathfrak{e}}$.
\end{remark}

Let us recall a basic property of multivalued functions. Let $X$ and $Y$ be metric spaces, and let $F: X \rightrightarrows Y$ be a multivalued map. If $\operatorname{graph}(F)$ is $\sigma$-compact, then
\begin{itemize}
    \item For any closed subset $C \subseteq Y$, the preimage $F^{-1}(C)$ is $\sigma$-compact in $X$.
    \item For any closed subset $D \subseteq X$, the image $F(D)$ is $\sigma$-compact in $Y$.
\end{itemize}
Let us denote $X = \mathrm{supp}(\mu)$ and $Y = \mathrm{supp}(\nu)$.
\begin{lemma}
The multivalued functions $\mathcal{P}, \mathcal{P}^{-1}, \mathcal{R}$ have a $\sigma$-compact graph. Therefore the transport sets $\mathcal{T}$ and $\mathcal{T}_{\mathfrak{e}}$ are $\sigma$-compact and Borel measurable.
\end{lemma}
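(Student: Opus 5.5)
The plan is to show that each graph is in fact closed in $M\times M$. Since $M$ is compact by Bonnet--Myers, every closed subset of $M\times M$ is compact, hence $\sigma$-compact. Then $\mathcal{T}$ and $\mathcal{T}_{\mathfrak{e}}$ will follow as projections of $\sigma$-compact sets.

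\emph{Step 1 (graphs of $\mathcal{P},\mathcal{P}^{-1},\mathcal{R}$).} We have
\[
\operatorname{graph}(\mathcal{P})=\{(x,z)\in M\times M:\ u(x)-u(z)-d(x,z)=0\}.
\]
This is the zero set of a continuous function, since $u$ is $1$-Lipschitz and $d$ is continuous. It is therefore closed, hence compact. In fact $\operatorname{graph}(\mathcal{P})=\partial u$. The same argument with the roles of $x$ and $z$ exchanged handles $\mathcal{P}^{-1}$. Finally $\operatorname{graph}(\mathcal{R})=\operatorname{graph}(\mathcal{P})\cup\operatorname{graph}(\mathcal{P}^{-1})$ is a finite union of compact sets, hence compact.

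\emph{Step 2 ($\mathcal{T}_{\mathfrak{e}}$).} Consider the set
\[
G:=\{(x,y,z)\in M^3:\ (x,y)\in\partial u,\ d(x,z)+d(z,y)=d(x,y)\}.
\]
It is closed, hence compact. A point $z$ lies on some minimizing geodesic from $x$ to $y$ exactly when $d(x,z)+d(z,y)=d(x,y)$. Moreover, if $(x,y)\in\partial u$ and $z$ satisfies this equality, then every geodesic segment through $z$ realizing the equality again has endpoints forming optimal pairs, because of the triangle-type argument $u(x)-u(z)=d(x,z)$. So membership does not depend on which minimizing segment is used. To exclude the degenerate pairs $x=y$, write
\[
\{x\neq y\}=\bigcup_k\{d(x,y)\ge 1/k\},
\]
and set $G_k:=G\cap\{d(x,y)\ge 1/k\}$. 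Each $G_k$ is compact, and $\mathcal{T}_{\mathfrak{e}}=\bigcup_k \pi_3(G_k)$. Continuous images of compact sets are compact, so $\mathcal{T}_{\mathfrak{e}}$ is $\sigma$-compact.

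\emph{Step 3 ($\mathcal{T}$).} Here we use the open segments instead. Define
\[
G_k':=G\cap\{d(x,z)\ge 1/k,\ d(z,y)\ge 1/k\}.
\]
This set is compact, and we get $\mathcal{T}=\bigcup_k\pi_3(G_k')$. One check is needed here. A point $z$ with $d(x,z)>0$ and $d(z,y)>0$ lies on a minimizing geodesic from $x$ to $y$, and we must make sure it is in the relative interior of a segment $\rrbracket x,y\llbracket$. This holds by definition, since the relative interior consists of exactly the points at positive distance from both endpoints. Hence $\mathcal{T}$ is $\sigma$-compact. Both sets are therefore $F_\sigma$ and in particular Borel. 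This agrees with the recalled fact that the image of a closed set under a multivalued map with $\sigma$-compact graph is $\sigma$-compact.

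The main subtlety lies in Step 2. When $y$ is in the cut locus of $x$, there are several minimizing segments between them, and the notation $\llbracket x,y\rrbracket$ refers to one chosen geodesic. I would interpret $\mathcal{T}$ and $\mathcal{T}_{\mathfrak{e}}$ as the union over all minimizing segments, which is the natural reading in Feldman--McCann. Under that reading the metric characterization $d(x,z)+d(z,y)=d(x,y)$ is exact, and this is the formulation I would adopt.
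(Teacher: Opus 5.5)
Your proof is correct. Step 1 coincides with the paper's: $\operatorname{graph}(\mathcal{P})=\partial u$ is closed, and $M\times M$ is compact.

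For the transport sets the routes differ. The paper removes the diagonal and observes that $\operatorname{graph}(\mathcal{P}\setminus\mathrm{Id})$ and $\operatorname{graph}(\mathcal{P}^{-1}\setminus\mathrm{Id})$ are relatively open in compact sets, hence $\sigma$-compact. It then applies the image principle for multivalued maps with $\sigma$-compact graph, together with the identities $\mathcal{T}=(\mathcal{P}\setminus\mathrm{Id})(X)\cap(\mathcal{P}^{-1}\setminus\mathrm{Id})(Y)$ and $\mathcal{T}_{\mathfrak{e}}=(\mathcal{P}\setminus\mathrm{Id})(X)\cup(\mathcal{P}^{-1}\setminus\mathrm{Id})(Y)$. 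You instead lift to triples through the betweenness relation $d(x,z)+d(z,y)=d(x,y)$ and project the compact pieces $G_k$, $G_k'$.

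The paper's route is shorter and stays at the level of the multivalued maps, which are reused afterwards (e.g.\ for the endpoint maps). However, it takes those two set identities for granted. Checking them needs exactly the argument you have: if $(x,z),(z,y)\in\partial u$ with $x\neq z\neq y$, then the $1$-Lipschitz bound forces $(x,y)\in\partial u$ and $d(x,z)+d(z,y)=d(x,y)$, so $z$ is interior to a minimizing segment obtained by concatenation. Moreover, with $X=\operatorname{spt}\mu$ and $Y=\operatorname{spt}\nu$, those identities are not literally the definition of $\mathcal{T}$, which ranges over all of $\partial u$.

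Your route buys several things:
\begin{itemize}
\item It checks the set identity directly against the stated definition.
\item It makes the removal of degenerate pairs explicit through the exhaustion by $\{d\ge 1/k\}$, which is the concrete content of the paper's ``open in a compact metric space, hence $\sigma$-compact''.
\item It explicitly settles the ambiguity of ``a'' geodesic segment at cut points, which the paper passes over.
\item It adapts at once if the admissible pairs are restricted to a closed subset of $\partial u$.
\end{itemize}
The remark in your Step 2 that segments through $z$ again have optimal endpoints is not needed once you adopt the all-minimizing-segments reading.
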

\begin{proof}
We follow the argument of Caravenna \cite{caravenna2011a}.

\emph{Step 1: $\operatorname{graph}(\mathcal{P})$ is closed.} Let $(x_k,z_k)\to(x,z)$ in $M\times M$ with $z_k\in\mathcal{P}(x_k)$. By definition,
\[
u(z_k)=u(x_k)-d(x_k,z_k).
\]
Passing to the limit gives $u(z)=u(x)-d(x,z)$, i.e. $z\in\mathcal{P}(x)$. Hence $\operatorname{graph}(\mathcal{P})$ is closed.

Since $M$ is compact, $M\times M$ is compact; therefore $\operatorname{graph}(\mathcal{P})$ is compact, hence $\sigma$-compact. The same argument applies to $\operatorname{graph}(\mathcal{P}^{-1})$, and since
\[
\operatorname{graph}(\mathcal{R})=\operatorname{graph}(\mathcal{P})\cup\operatorname{graph}(\mathcal{P}^{-1}),
\]
we also obtain that $\operatorname{graph}(\mathcal{R})$ is $\sigma$-compact.

\emph{Step 2: remove the diagonal.} Note that
\[
\operatorname{graph}(\mathcal{P}\setminus\mathrm{Id})
=\operatorname{graph}(\mathcal{P})\setminus\operatorname{graph}(\mathrm{Id}).
\]
Since $\operatorname{graph}(\mathrm{Id})$ is closed in $M\times M$, its intersection with $\operatorname{graph}(\mathcal{P})$ is closed in $\operatorname{graph}(\mathcal{P})$, hence the complement $\operatorname{graph}(\mathcal{P}\setminus\mathrm{Id})$ is open in $\operatorname{graph}(\mathcal{P})$ and therefore $\sigma$-compact. The same holds for $\operatorname{graph}(\mathcal{P}^{-1}\setminus\mathrm{Id})$.

\emph{Step 3: conclusion for $\mathcal{T}$ and $\mathcal{T}_{\mathfrak{e}}$.} Using the characterization
\[
\mathcal{T}=(\mathcal{P} \backslash \operatorname{Id})(X) \cap\left(\mathcal{P}^{-1} \backslash \operatorname{Id}\right)(Y), \quad \mathcal{T}_{\mathfrak{e}}=(\mathcal{P} \backslash \operatorname{Id})(X) \cup\left(\mathcal{P}^{-1} \backslash \operatorname{Id}\right)(Y),
\]
we deduce that $\mathcal{T}$ and $\mathcal{T}_{\mathfrak{e}}$ are $\sigma$-compact. In particular, they are Borel measurable.
\end{proof}
\begin{lemma}
The endpoint maps $a,b:\mathcal{T}_{\mathfrak{e}}\rightrightarrows\mathcal{T}_{\mathfrak{e}}$ are Borel measurable.
\end{lemma}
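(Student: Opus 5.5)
The plan is to follow the pattern of the previous lemma and show that $\operatorname{graph}(a)$ and $\operatorname{graph}(b)$ are $\sigma$-compact, or at least Borel, subsets of $M\times M$. Measurability of a multivalued map is then meant in the sense used above: the preimage of every closed set is $\sigma$-compact. I will do the case of $a$; the case of $b$ is symmetric, obtained by exchanging $\mathcal{P}$ and $\mathcal{P}^{-1}$.

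\emph{Step 1: characterize $a$ through $\mathcal{P}$.} The first step is to express $a$ using the multivalued maps already studied. A point $z$ is an upper endpoint of the ray through $y$ exactly when two conditions hold. First, $z\in\mathcal{P}^{-1}(y)$, i.e.\ $z\in\mathcal{R}(y)$ with $u(z)\ge u(y)$. Second, $z$ admits no strict incoming extension, i.e.\ $\mathcal{P}^{-1}(z)=\{z\}$. This gives
\[
\operatorname{graph}(a)=\operatorname{graph}(\mathcal{P}^{-1})\cap\bigl(\mathcal{T}_{\mathfrak{e}}\times A\bigr),\qquad A:=\{z\in M:\ \mathcal{P}^{-1}(z)=\{z\}\}.
\]
Here I use the branching description of Feldman--McCann (Lemma 9, quoted above) to justify that $\mathcal{P}^{-1}(y)$ lies along the ray through $y$ whenever $y$ is interior. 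Points of $\mathcal{T}_{\mathfrak{e}}\setminus\mathcal{T}$ may belong to several rays. For these, $a$ is genuinely multivalued, and the formula above simply records all the upper endpoints, which is acceptable for a multivalued map.

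\emph{Step 2: Borel regularity of $A$.} The complement of $A$ is
\[
M\setminus A=p_1\bigl(\operatorname{graph}(\mathcal{P}^{-1}\setminus\mathrm{Id})\bigr).
\]
The set $\operatorname{graph}(\mathcal{P}^{-1}\setminus\mathrm{Id})$ is $\sigma$-compact by Step 2 of the previous lemma. Continuous projections of $\sigma$-compact sets are $\sigma$-compact, so $M\setminus A$ is $\sigma$-compact. Hence $A$ is a $G_\delta$ set, and in particular Borel. By the previous lemma, $\mathcal{T}_{\mathfrak{e}}$ is $\sigma$-compact and $\operatorname{graph}(\mathcal{P}^{-1})$ is compact. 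It follows that $\operatorname{graph}(a)$ is Borel in $M\times M$.

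\emph{Step 3: from Borel graph to measurable multifunction.} The main obstacle is this final passage, because $A$ is only $G_\delta$, not $\sigma$-compact. Projections of Borel sets are in general only analytic, so a Borel graph does not immediately make $a^{-1}(C)=p_1\bigl(\operatorname{graph}(a)\cap(M\times C)\bigr)$ Borel. To get around this, I would first try to exploit the fact that each section $\mathcal{P}^{-1}(y)$ is compact and that $u$ is continuous. Write
\[
a(y)=\operatorname*{argmax}_{z\in\mathcal{P}^{-1}(y)\cap\mathcal{R}(y)}u(z)
\]
along each ray. The function $y\mapsto\max_{\mathcal{P}^{-1}(y)}u$ is upper semicontinuous, because $\operatorname{graph}(\mathcal{P}^{-1})$ is closed. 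Consequently, the graph of the maximizer set is
\[
\{(y,z)\in\operatorname{graph}(\mathcal{P}^{-1}): u(z)=\max_{\mathcal{P}^{-1}(y)}u\},
\]
which is the intersection of a closed set with the graph-equality set of a semicontinuous function, hence Borel with compact sections. Multifunctions with Borel graph and compact (in particular $\sigma$-compact) sections have Borel images of Borel sets, by the Arsenin--Kunugui theorem. Applying it yields that $a^{-1}(C)$ is Borel for every closed $C$. This establishes Borel measurability of $a$, and the same argument gives it for $b$.

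There is one caveat at branching points, where $\mathcal{P}^{-1}(y)$ contains several rays. There the maximizer set may miss some upper endpoints of shorter rays. If needed, I would restrict to $\mathcal{T}$, where rays are unique by Lemma 9, and then handle the Borel set $\mathcal{T}_{\mathfrak{e}}\setminus\mathcal{T}$ using the formula of Step 1 together with the Arsenin--Kunugui theorem.
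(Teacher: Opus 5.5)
Your Steps 1--2 are essentially the paper's proof. The paper writes $\operatorname{graph}(a)=\bigl(\mathcal{T}_{\mathfrak{e}}\times\beta^{-1}(0)\bigr)\cap\operatorname{graph}(\mathcal{P}^{-1})$ and stops there. Here $\beta$ is the upper semicontinuous function of Feldman--McCann, and for the paper ``Borel measurable'' for a multivalued map just means ``Borel graph''. You change only how the set of ray tops is described. You take the intrinsic set $A=\{z:\mathcal{P}^{-1}(z)=\{z\}\}$ and get its $G_\delta$ regularity directly from the $\sigma$-compactness of $\operatorname{graph}(\mathcal{P}^{-1}\setminus\mathrm{Id})$ proved in the previous lemma. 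The paper instead imports the semicontinuity of $\beta$, which also depends on $X=\operatorname{supp}\mu$. Your route reaches the same conclusion and is more self-contained.

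Step 3 goes beyond what the paper claims, and part of it does not go through. First, it does not reach the target you announced at the start, namely that preimages of closed sets are $\sigma$-compact. You never show $\operatorname{graph}(a)$ is $\sigma$-compact, so at best you get Borel preimages.

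The argmax part is sound. The function $m(y)=\max_{\mathcal{P}^{-1}(y)}u$ is upper semicontinuous, and $\{(y,z)\in\operatorname{graph}(\mathcal{P}^{-1}):u(z)=m(y)\}$ is Borel with compact sections. So $a^{-1}(C)$ is Borel by the compact-section case of Arsenin--Kunugui, and on $\mathcal{T}$ this map is exactly $a$.

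Your repair on $\mathcal{T}_{\mathfrak{e}}\setminus\mathcal{T}$ (the formula of Step 1 plus Arsenin--Kunugui) is the problem. It needs the sections $\mathcal{P}^{-1}(y)\cap A$ to be $\sigma$-compact, but you only know they are $G_\delta$. Over a branching point $y$ this section is the set of tips of all rays leaving $y$. It need not be closed, since tips of rays from $y$ may accumulate at an interior point of a longer ray from $y$, and nothing you say makes it $\sigma$-compact. Without that hypothesis the theorem gives nothing: Borel sets with $G_\delta$ sections can have non-Borel analytic projections.

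There are two easy fixes:
\begin{itemize}
\item Stop after Step 2, which already proves the lemma in the paper's sense.
\item Read the paper's definition literally: $a(y)$ is where $u$ is maximal on $\mathcal{R}(y)=\mathcal{P}(y)\cup\mathcal{P}^{-1}(y)$. Since $u\le u(y)$ on $\mathcal{P}(y)$ and $y\in\mathcal{P}^{-1}(y)$, this equals $\operatorname{argmax}_{\mathcal{P}^{-1}(y)}u$ on all of $\mathcal{T}_{\mathfrak{e}}$. Your compact-section argument then covers branching points, and the caveat disappears.
\end{itemize}
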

\begin{proof}
For each $z\in\mathcal{T}_{\mathfrak{e}}$, Feldman and McCann define in \cite{feldman2001monge}
\[
\begin{aligned}
\alpha(z)&:=\sup\left\{d(y,z):\ y\in Y,\ u(z)-u(y)=d(y,z)\right\},\\
\beta(z)&:=\sup\left\{d(x,z):\ x\in X,\ u(x)-u(z)=d(x,z)\right\}.
\end{aligned}
\]
Then $\alpha,\beta:M\to\mathbb{R}_{\ge 0}\cup\{-\infty\}$ are upper semicontinuous. Geometrically, $\alpha(z)$ and $\beta(z)$ represent the distances from $z$ to the lower and upper endpoints, respectively, of the corresponding transport ray $\mathcal{R}_z$.

Consequently, the endpoint maps can be written as
\[
\begin{aligned}
 a(z)&=\left\{y\in\mathcal{T}_{\mathfrak{e}}:\ \beta(y)=0,\ u(z)=u(y)-d(z,y)\right\},\\
 b(z)&=\left\{y\in\mathcal{T}_{\mathfrak{e}}:\ \alpha(y)=0,\ u(y)=u(z)-d(z,y)\right\}.
\end{aligned}
\]
To prove that $a$ and $b$ are Borel measurable, it suffices to show that their graphs are Borel subsets of $\mathcal{T}_{\mathfrak{e}}\times\mathcal{T}_{\mathfrak{e}}$. We treat $\operatorname{graph}(a)$; the case of $b$ is analogous. Observe that
\[
\operatorname{graph}(a)=\left(\mathcal{T}_{\mathfrak{e}}\times\beta^{-1}(0)\right)\cap\left\{(z,y):\ u(z)=u(y)-d(z,y)\right\}
=\left(\mathcal{T}_{\mathfrak{e}}\times\beta^{-1}(0)\right)\cap\operatorname{graph}(\mathcal{P}^{-1})
\subset M \times  M,\]
which is a Borel subset.
\end{proof}
\begin{theorem}
The transport set $\mathcal{T}$ satisfies the following properties:
\begin{enumerate}
\item The vector field $\nabla u$ has the countable Lipschitz property, i.e. there exists a $\mu$-negligible set $N\subset\mathcal{T}$ and an increasing sequence of compact sets $(K_m)_{m\in\mathbb{N}}$ such that $\nabla u\vert_{K_m}$ is Lipschitz for every $m$ and
\[
\bigcup_{m\in\mathbb{N}} K_m = \mathcal{T}\setminus N.
\]
\item $\operatorname{Vol}_g\bigl(\mathcal{T}_{\mathfrak{e}}\setminus\mathcal{T}\bigr)=0$.
\end{enumerate}
\end{theorem}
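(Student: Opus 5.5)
Both items rest on the local regularity of $u$ near the interior of transport rays, as in Feldman--McCann \cite{feldman2001monge}. The plan is to prove (1) first and then derive (2) from the resulting structure.

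\emph{Step 1: local semiconcavity/semiconvexity.} Fix $z\in\mathcal{T}$, so $z\in\rrbracket x,y\llbracket$ for some $(x,y)\in\partial u$. The inequality $u(w)\ge u(y)+d(w,y)$ holds for all $w$, with equality at $w=z$. Since $z$ is an interior point of a minimizing geodesic to $y$, $z\notin\mathrm{Cut}(y)$. Hence $d(\cdot,y)$ is smooth near $z$, and $u$ is touched from below at $z$ by a smooth function. Symmetrically, $u(w)\le u(x)-d(x,w)$ gives a smooth upper support at $z$.

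For uniformity, restrict to the sets
\[
\mathcal{T}_\epsilon:=\{z:\ \alpha(z)\ge\epsilon,\ \beta(z)\ge\epsilon\}.
\]
On these sets the supporting points $x,y$ lie at distance $\ge\epsilon$ along the ray. By compactness of $M$ and $\operatorname{Sec}\ge K$, the supporting functions $d(\cdot,y)$ then have Hessians bounded by a constant $C_\epsilon$ in a uniform neighbourhood. I would obtain this via Hessian comparison, together with the fact that $z$ stays a definite distance from $\mathrm{Cut}(y)$ along the extended geodesic; this is where the argument is delicate. We also have $\mathcal{T}=\bigcup_{\epsilon=1/k}\mathcal{T}_{\epsilon}$.

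\emph{Step 2: Lipschitz gradient on compact pieces.} At each $z\in\mathcal{T}_\epsilon$, $u$ is squeezed between two $C^{1,1}$ functions with $C^{1,1}$ constant $\le C_\epsilon$ which touch at $z$. Hence $u$ is differentiable at $z$, with $\nabla u(z)$ equal to minus the unit tangent of the ray. The standard argument (as in Caravenna and Feldman--McCann, Lemma 16 type estimates) then gives
\[
\|\nabla u(z_1)-P_{z_2\to z_1}\nabla u(z_2)\|\le L_\epsilon\, d(z_1,z_2)\qquad\text{for } z_1,z_2\in\mathcal{T}_\epsilon \text{ close enough},
\]
where $P$ is parallel transport; equivalently, the statement holds in charts. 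Compare the upper support at $z_1$ with the lower support at $z_2$ evaluated at a midpoint-like test point $w$, and use the quadratic bounds.

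Upper semicontinuity of $\alpha,\beta$ makes $\mathcal{T}_\epsilon$ closed in $M$, hence compact. Covering by finitely many charts yields that $\nabla u|_{K_m}$ is Lipschitz with $K_m:=\mathcal{T}_{1/m}$. These sets exhaust $\mathcal{T}$, so $N=\emptyset$ already works. If closedness of $\mathcal{T}_\epsilon$ fails at degenerate boundary points, intersect with inner compact approximations given by $\sigma$-compactness (the preceding Lemma), discarding a $\mu$-null set $N$.

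\emph{Step 3: negligibility of endpoints.} $\mathcal{T}_{\mathfrak e}\setminus\mathcal{T}$ consists of upper endpoints $a$ and lower endpoints $b$. Treat the set $A_\epsilon$ of upper endpoints of rays of length $\ge\epsilon$. Following Feldman--McCann, consider the map
\[
F_t(p)=\exp_p\bigl(-t\nabla u(p)\bigr)
\]
on the interior set at distance $\epsilon/2$ below the upper endpoints, run backward to time $t=\epsilon/2$. On each $K_m$ this map is Lipschitz by Step 2, and its image covers $A_\epsilon$. The flow along rays has Jacobian controlled by Jacobi fields. I would show that $\mathrm{Vol}_g$ of the image at the endpoint is $\le (s/\epsilon)^{n-1}$-type multiples of the volume at intermediate levels, using the Lipschitz estimate in reverse: it forces the transversal Jacobian to shrink at most polynomially as $s\to0$ along rays. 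Letting $s\to0$ then yields $\mathrm{Vol}_g(A_\epsilon)=0$. Take the union over $\epsilon$, and treat lower endpoints symmetrically. Rays of length zero do not occur, since $x\ne y$ in $\mathcal{T}_{\mathfrak e}$.

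The main obstacle is this volume comparison: transferring the quantitative Lipschitz control of Step 2 into a Jacobian lower bound on the inverse flow in curved geometry, which requires bounds on Jacobi fields along geodesics away from conjugate points.
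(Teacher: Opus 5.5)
Your Steps 1--2 are essentially the Feldman--McCann argument that the paper's proof only cites (Lemmas 19 and 24 of \cite{feldman2001monge}). They work, up to slips. The supports in Step 1 are reversed: correctly, $u(x)-d(x,w)\le u(w)\le u(y)+d(w,y)$, with equality at $w=z$. Also, $\nabla u$ points toward the \emph{upper} end, so the flow to $a$ is $\exp_p(+t\nabla u(p))$. The ``delicate'' two-sided Hessian control near $\mathrm{Cut}(y)$ is unnecessary. The two quadratic bounds at $z$ use only the one-sided bound $\operatorname{Hess} d(p,\cdot)\le C/\epsilon$ on $\{d(p,\cdot)\ge \epsilon/2\}$, which holds in the support sense even across cut loci (semiconcavity of distance functions away from the base point). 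Since $\alpha,\beta$ are upper semicontinuous, $K_m=\mathcal{T}_{1/m}$ is compact and $N=\emptyset$ is fine.

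The genuine gap is Step 3. Covering $A_\epsilon$ by the Lipschitz image $F(P)$ of the section $P=\{\beta=\epsilon/2\}$ only gives $\operatorname{Vol}_g(A_\epsilon)\le L^n\operatorname{Vol}_g(P)$. But $P$ is a Borel set meeting each ray once, so showing it is null is as hard as the original claim. The Jacobi-field comparison you propose does not supply the missing smallness. A \emph{lower} bound on the transversal Jacobian (``shrinks at most polynomially'') points the wrong way for proving a set is null. Nothing forces a factor like $(s/\epsilon)^{n-1}$ either: for nearly parallel rays, the map from the section at distance $s$ to the ends is almost an isometry transversally. At best you get $\operatorname{Vol}_g(A_\epsilon)\le C\operatorname{Vol}_g(P_s)$ with $C$ not tending to $0$.

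There are two ways to close the gap, both using only upper (Lipschitz) bounds.

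(i) For rational $v$ and $m$, cover $S_v\cap\mathcal{T}_{1/m}$ by Borel pieces $B$ with bi-Lipschitz charts $U:B\to\mathbb{R}^{n-1}$ (Lemma 17 of \cite{feldman2001monge}). Since $\nabla u|_B$ is Lipschitz, $\Psi(z,t)=\exp_z(t\nabla u(z))$ is Lipschitz on $B\times[-\operatorname{diam}M,\operatorname{diam}M]$. The upper ends of rays through $B$ form $\Psi(\{(z,\beta(z)):z\in B\})$. This is a Lipschitz image of the graph of a Borel function over a set bi-Lipschitz to a subset of $\mathbb{R}^{n-1}$, hence $\mathcal{H}^n$-null by Fubini. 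Every nondegenerate ray meets some rational level at an interior point, so countably many $B$ cover all ends.

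(ii) Closer to your sketch: track the constant in Step 2, which gives $\operatorname{Lip}(\nabla u)\lesssim 1/s$ on $P_s=\{\beta=s,\ \alpha\ge\epsilon-s\}$. Then $F_s(p)=\exp_p(s\nabla u(p))$ satisfies $A_\epsilon\subset F_s(P_s)$, with Lipschitz constant bounded independently of $s\in(0,\epsilon/2)$. Hence $\operatorname{Vol}_g(A_\epsilon)\le C\operatorname{Vol}_g(P_s)$ for every such $s$. The $P_s$ are pairwise disjoint subsets of the compact $M$, so this forces $\operatorname{Vol}_g(A_\epsilon)=0$.
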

\begin{proof}
Part~(2) is proved in Lemma~24 of \cite{feldman2001monge}. Part~(1) also follows from the arguments in \cite{feldman2001monge}; for instance, it may be derived using Lemma~19 therein.
\end{proof}

\subsection{Sheaf sets and cylinder sets}
In this subsection we draw ideas from \cite{caravenna2011a} and \cite{feldman2001monge} to define sheaf sets $\{\mathcal{Z}_{\mathfrak{e}}\}$ and cylinder sets $\{\mathcal{K}\}$ via a modification of Feldman--McCann's ray clusters. These sets form a countable cover of $\mathcal{T}_{\mathfrak{e}}$, and we will derive a Fubini--Tonelli decomposition of measures on them. This lays the foundation for the explicit disintegration formula in our Theorem 1.
\begin{lemma}\label{lemma : bi-lipschitz parametrization of levelsets}
(Lemma 17, \cite{feldman2001monge}). Let $u: M \rightarrow \mathbb{R}^1$ be a Lipschitz function, $v \in \mathbb{R}^1$, and $S_v$ the level set $\{x \in M \mid u(x)=v\}$. Then the set
$$
S_v \cap\{x \in M \mid u \text { is differentiable at } x \text { and } \nabla u(x) \neq 0\}
$$
has a countable covering consisting of Borel sets $S_v^i \subset S_v$, such that for each $i \in \mathbf{N}$ there exist Lipschitz maps $U: S_v^i \rightarrow \mathbb{R}^{n-1}$ and $V: \mathbb{R}^{n-1} \rightarrow M$ (i.e., Lipschitz maps between metric spaces $(M, d)$ and $\left(\mathbb{R}^{n-1},|\cdot|\right)$, where $|\cdot|$ is a Euclidean metric on $\mathbb{R}^{n-1}$), satisfying
$$
V(U(z))=z \quad \text { for all } z \in S_v^i .
$$
Also, the $U\left(S_v^i\right)$ are Borel subsets of $\mathbb{R}^{n-1}$.
\end{lemma}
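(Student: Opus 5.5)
The plan is to reduce to charts and apply a Lipschitz-graph argument on the level set. Since $M$ is compact, I will cover it by finitely many normal-coordinate charts $\phi_j:W_j\to\mathbb{R}^n$, chosen bi-Lipschitz with respect to $d$ and $|\cdot|$ (possible on small geodesically convex balls). It then suffices to cover each $S_v\cap W_j\cap\{\text{$u$ differentiable},\ \nabla u\neq 0\}$. In a chart, $\tilde u=u\circ\phi_j^{-1}$ is Lipschitz on an open subset of $\mathbb{R}^n$ and $\nabla\tilde u\neq 0$ at the relevant points, because $\phi_j$ is a diffeomorphism.

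The set $E=\{x:\tilde u \text{ differentiable at } x,\ \nabla\tilde u(x)\neq 0\}$ is Borel. I will split it into countably many Borel pieces $E_{k,e,\delta}$, indexed by a coordinate direction $e\in\{\pm e_1,\dots,\pm e_n\}$, a rational $\delta>0$ and $k\in\mathbf{N}$. On $E_{k,e,\delta}$ we require $\partial_e\tilde u(x)\ge\delta$, together with the quantitative differentiability condition
\[
|\tilde u(y)-\tilde u(x)-\nabla\tilde u(x)\cdot(y-x)|\le \tfrac{\delta}{4}|y-x| \quad\text{for } |y-x|<1/k .
\]
I further intersect each piece with cubes of side $\ll 1/k$. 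This is the standard decomposition, and Borel measurability follows by writing the conditions with countably many rational $y$ and using continuity of $\tilde u$.

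On one such small piece $A$, let $P$ be the orthogonal projection onto $e^\perp\cong\mathbb{R}^{n-1}$. The key claim is that $P$ restricted to $A\cap\{\tilde u=v\}$ is injective with Lipschitz inverse. To see this, take $x,y$ in that set. Then $\tilde u(y)=\tilde u(x)$, and the differentiability estimate at $x$ gives $|\nabla\tilde u(x)\cdot(y-x)|\le\frac{\delta}{4}|y-x|$. Writing $y-x=se+w$ with $w\perp e$, we get $\delta|s|\le \partial_e\tilde u(x)|s|\le L|w|+\frac{\delta}{4}(|s|+|w|)$, where $L=\mathrm{Lip}(\tilde u)$. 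Hence $|s|\le C|w|$, so $|y-x|\le (1+C)|Px-Py|$.

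Set $U=P\circ\phi_j$ on $S_v^i:=\phi_j^{-1}(A)\cap S_v$; this map is Lipschitz. Then $(P|_{A\cap\{\tilde u=v\}})^{-1}$ is Lipschitz from $U(S_v^i)$ into $\mathbb{R}^n$, and I extend it to all of $\mathbb{R}^{n-1}$ by Kirszbraun or componentwise McShane extension. The extension may leave $\phi_j(W_j)$. To handle this, I take $A$ inside a slightly smaller compact sub-chart, compose with a Lipschitz retraction of $\mathbb{R}^n$ onto a closed ball contained in the chart image, and then apply $\phi_j^{-1}$. This gives $V:\mathbb{R}^{n-1}\to M$ Lipschitz with $V\circ U=\mathrm{id}$ on $S_v^i$.

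Finally, $U(S_v^i)$ must be Borel. It is the image of a Borel set under a map that is injective on it, so it is Borel by the Lusin--Souslin theorem. Alternatively, I could arrange $A$ to be $\sigma$-compact by using an $F_\sigma$ refinement, since the conditions above are closed in $x$ once the gradient condition is fixed.

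I expect the main obstacle to be the measurability bookkeeping rather than the geometry. The set where $u$ is differentiable and the map $x\mapsto\nabla u(x)$ must be shown Borel, and the condition on the pieces must be phrased so that both the Borel property and the image claim hold. The Lipschitz-graph estimate itself is elementary.
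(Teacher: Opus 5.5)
Your proof is correct. The paper gives no argument of its own for this lemma: it quotes Feldman--McCann's Lemma 17 verbatim. What you have written is a correct, self-contained version of the standard chart-wise Lipschitz-graph argument behind that citation. You cut the regular part of the level set into Borel pieces of uniform differentiability with $\partial_e\tilde u\ge\delta$ in a fixed coordinate direction and diameter below $1/k$. The cone estimate $|s|\le C|w|$ then makes each piece a Lipschitz graph over $e^\perp$. You take $U=P\circ\phi_j$, and build $V$ from a Kirszbraun extension, a retraction onto a closed ball in the chart image, and $\phi_j^{-1}$.

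Two minor remarks.
\begin{enumerate}
\item Compactness of $M$ is used only to get finitely many charts. In the paper's setting this holds by Bonnet--Myers, and for the lemma as stated a countable atlas works equally well.
\item You can avoid Lusin--Souslin. Your $V$ always lands in $\phi_j^{-1}$ of the closed ball, so
\[
U(S_v^i)=\{p\in\mathbb{R}^{n-1}:\ V(p)\in S_v^i,\ P\circ\phi_j(V(p))=p\}
\]
is the intersection of a Borel set with a closed set, hence Borel.
\end{enumerate}
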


An immediate consequence of this bi-Lipschitz parametrization is that
\[
E:=S_v \cap \left\{x \in M : u \text{ is differentiable at } x \text{ and } \nabla u(x) \neq 0\right\}
\]
is $\mathcal{H}^{n-1}$-countably rectifiable.

\begin{theorem}\label{thm: approximate tangent space}
(Theorem 3.1.6, \cite{Simon1983GMT}) Suppose $E  \subset \mathbb{R}^{n+k}$ is $\mathcal{H}^n$-measurable with $\mathcal{H}^n(E \cap K)<\infty$ for each compact $K \subset \mathbb{R}^{n+k}$. Then $E$ is countably $n$-rectifiable if and only if the approximate tangent space $T_x E$ exists for $\mathcal{H}^n$-a.e. $x \in E$. Suppose $E  \subset \mathbb{R}^{n+k}$ is $\mathcal{H}^n$-measurable with $\mathcal{H}^n(E \cap K)<\infty$ for each compact $K \subset \mathbb{R}^{n+k}$. Then $E$ is countably $n$-rectifiable if and only if the approximate tangent space $T_x E$ exists for $\mathcal{H}^n$-a.e. $x \in E$.
\end{theorem}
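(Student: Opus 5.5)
Since this is Simon's Theorem 3.1.6, a classical result of geometric measure theory that the paper only quotes, I will sketch the standard proof. Throughout, $E\subset\mathbb{R}^{n+k}$ is $\mathcal{H}^n$-measurable and locally $\mathcal{H}^n$-finite. The approximate tangent space $T_xE$ is the $n$-plane $P$ such that the blow-ups $\eta_{x,\lambda\#}(\mathcal{H}^n\llcorner E)$, with $\eta_{x,\lambda}(y)=(y-x)/\lambda$, converge as Radon measures to $\mathcal{H}^n\llcorner P$ as $\lambda\downarrow 0$. This means $\lambda^{-n}\int_E f((y-x)/\lambda)\,d\mathcal{H}^n(y)\to\int_P f\,d\mathcal{H}^n$ for every $f\in C_c(\mathbb{R}^{n+k})$.

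\emph{Rectifiable $\Rightarrow$ approximate tangents.} Write $E=E_0\cup\bigcup_j E_j$ with $\mathcal{H}^n(E_0)=0$ and $E_j\subset N_j$, where each $N_j$ is an embedded $C^1$ $n$-submanifold. This uses the Lusin-type approximation of Lipschitz maps by $C^1$ maps (Whitney extension) together with the area formula. Fix $j$ and take $x\in E_j$ such that:
\begin{itemize}
\item $x$ is a density point of $E_j$ relative to $\mathcal{H}^n\llcorner N_j$, which holds $\mathcal{H}^n$-a.e.\ by Lebesgue differentiation on the $C^1$ manifold;
\item $\Theta^n(\mathcal{H}^n\llcorner(E\setminus E_j),x)=0$, which holds a.e.\ by the standard upper density estimate $\Theta^{*n}(\mathcal{H}^n\llcorner A,x)=0$ for $\mathcal{H}^n$-a.e.\ $x\notin A$, applied with $A=E\setminus E_j$.
\end{itemize}
At such $x$, the blow-ups of $\mathcal{H}^n\llcorner N_j$ converge to $\mathcal{H}^n\llcorner T_xN_j$ because $N_j$ is $C^1$. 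The two density conditions show that the blow-ups of $\mathcal{H}^n\llcorner E$ differ from these by a vanishing amount on each compact set. Hence $T_xE=T_xN_j$.

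\emph{Approximate tangents $\Rightarrow$ rectifiable; this is the main obstacle.} Assume $T_xE$ exists a.e. The existence of $T_xE$ forces $\Theta^n(\mathcal{H}^n\llcorner E,x)=1$. It also forces the following cone condition: for every $\varepsilon>0$,
\[
\lim_{\rho\to0}\rho^{-n}\mathcal{H}^n\bigl(\{y\in E\cap B_\rho(x): \operatorname{dist}(y-x,T_xE)>\varepsilon|y-x|\}\bigr)=0.
\]
The plan is to decompose $E$, up to a null set, into countably many pieces $F$ with uniform control. On each piece $T_xE$ lies within $\varepsilon$ of a fixed plane $P$, the density ratio satisfies $\tfrac12\omega_n\le\rho^{-n}\mathcal{H}^n(E\cap B_\rho(x))$, and the bad-cone mass is less than $\delta\rho^n$ for all $\rho<\rho_0$, uniformly in $x\in F$. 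Uniformity is achieved by Egorov-type arguments and splitting by rational parameters. Each piece is then cut into sets of diameter less than $\rho_0$.

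The key geometric claim is that for $x,y\in F$ we have $|p_{P^\perp}(y-x)|\le C\varepsilon|y-x|$. To prove it, suppose this fails for some pair and set $\rho=|y-x|$. The balls $B_{c\varepsilon\rho}(y)$ lie inside the bad cone at $x$. The lower density bound at $y$ gives $\mathcal{H}^n(E\cap B_{c\varepsilon\rho}(y))\gtrsim(\varepsilon\rho)^n$, which contradicts the bad-cone mass bound $\delta\rho^n$ once $\delta\ll\varepsilon^n$. Hence $p_P$ is injective on $F$ with Lipschitz inverse, so $F$ is contained in a Lipschitz graph over $P$. This proves $E$ is countably $n$-rectifiable.

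The delicate part is the bookkeeping of the uniformization, together with checking that the density lower bound is genuinely available at $y$ as well as at $x$. Both points are handled by requiring membership in the same uniform piece $F$.
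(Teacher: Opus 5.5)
The paper does not prove this statement but only quotes it from Simon, and your sketch is the standard argument given there: the decomposition into pieces of embedded $C^1$ submanifolds plus the upper-density theorem for the forward direction, and for the converse the cone condition combined with a uniform lower density bound at both $x$ and $y$, which places each uniform piece inside a Lipschitz graph over a fixed plane. It is correct up to harmless constants. Since $B_{c\varepsilon\rho}(y)\subset B_{(1+c\varepsilon)\rho}(x)$, you should cut the pieces to diameter below $\rho_0/2$ so that the bad-cone bound at $x$ applies; also, splitting by rational parameters alone gives the uniformity, so no Egorov-type argument (with its measurability questions) is needed.
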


In fact, this result also holds on Riemannian manifolds: rectifiability and the existence of approximate tangent spaces are local properties, and a Riemannian manifold is locally modeled on Euclidean space via smooth (hence bi-Lipschitz) coordinate charts.

It is also noteworthy that our sheaf set is built upon the level sets of the Kantorovich potential $u$. This construction aligns it more closely with the ray clusters, which are the model sets introduced by Feldman and McCann.
\begin{defn}
(Lemma 21, \cite{feldman2001monge}). Fix $v \in \mathbb{R}^1$, a Kantorovich potential $u$ on $M$, and a Borel cover $\left\{S_v^i\right\}_i$ of the level set $S_v:=\{x \in M \mid u(x)=v\}$. Let $i \in \mathbf{N}$ and let $B$ be a Borel subset of $S_v^i$. For each $j \in \mathbf{N}$. Let the cluster $(\mathcal{T}_{\mathfrak{e}})_{v i j}(B):=\bigcup \mathcal{R}_z$ denote the union of all transport rays $\mathcal{R}_z$ which intersect $B$, and for which the point of intersection $z \in B$ is separated from both endpoints of the ray by a distance greater than $1 / j$ :
$$
(\mathcal{T}_{\mathfrak{e}})_{v i j}(B):=\bigcup_{z \in B}\left\{\mathcal{R}(z): d(a(z), z)>\frac{1}{j}, d(b(z), z)>\frac{1}{j}\right\} \subset M .
$$
The same cluster, but with ray ends omitted, is denoted by $\mathcal{T}_{v i j}(B):=\bigcup_z\left(\mathcal{R}_z\right) \backslash \bigcup_z a(z) \cup b(z)$.
\end{defn}

We now modify the model ray clusters to build sheaf sets. Fix $v\in\mathbb{R}$ and choose a countable Borel covering
\[
\bigcup_{i=1}^{\infty} S_{v i}=S_v\cap F,
\]
where $F=\{y\in M: u\text{ is differentiable at }y\text{ and }\nabla u(y)\neq 0\}$. By Lusin's theorem, we can find compact sets $\{K_{vij}\}_j\subset S_{pi}$ such that $\mathcal{H}^{n-1}\bigl((S_{vi}\cap F)\setminus\bigcup_j K_{vij}\bigr)=0$ and the restrictions $a\vert_{K_{vij}}$, $b\vert_{K_{vij}}$, and $(\nabla u)\vert_{K_{vij}}$ are continuous.

Since we work on a Riemannian manifold and will later use the exponential map, we further decompose each $K_{vij}$ into compact pieces $K_{vij}^\ell$ with
\[
\operatorname{diam}(K_{vij}^\ell)\le \delta/2,
\]
where $\delta>0$ is chosen so that $\delta<\inf\{\operatorname{inj}(\mathcal{T}_{\mathfrak{e}})\}$. For simplicity, re-index these compact sets as $\{K_{vq}\}_{q\in\mathbb{N}}$ and let $\hat K_{vq}$ denote their disjoint counterparts.
For $k\in\mathbb{N}$ and $l,m\in\mathbb{Z}$ with $l<m$, define the sheaf sets
\[
\mathcal{Z}_{qklm}^v
:=\left\{y:\ v+\frac{l-1}{2^k}\le u(y)\le v+\frac{m+1}{2^k}\right\}\cap\left\{y:\ |u(y)-v|\le \frac{\delta}{2}\right\}\cap\bigcup_{z\in K_{vq}} \llbracket a(z),b(z)\rrbracket.
\]
These sheaf sets consist of portions of rays confined between suitable level sets of $u$ and provide a cover of $\mathcal{T}$. Equivalently,
\[
\mathcal{Z}_{qklm}^v
=\left\{y:\ v+\max\left\{\frac{l-1}{2^k},-\frac{\delta}{2}\right\}\le u(y)\le v+\min\left\{\frac{m+1}{2^k},\frac{\delta}{2}\right\}\right\}\cap\bigcup_{z\in K_{vq}} \llbracket a(z),b(z)\rrbracket.
\]
\begin{lemma}
The family $\left\{\mathcal{Z}^v_{qklm}\right\}$ can be refined into a partition of the transport set $\mathcal{T}$.
\end{lemma}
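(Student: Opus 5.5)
We plan to mimic Caravenna's Euclidean argument: first show that the sheaf sets $\mathcal{Z}^v_{qklm}$ cover $\mathcal{T}$ up to a negligible set, and then extract a disjoint refinement by the usual ``subtract the previous ones'' procedure. The index set $(v,q,k,l,m)$ is made countable by letting $v$ range over a countable dense set, for instance $v\in\frac{\delta}{4}\mathbb{Z}\cap u(M)$, or dyadic rationals. We fix an enumeration $(\mathcal{Z}_n)_{n\in\mathbb{N}}$ of the resulting countable family and set
\[
\hat{\mathcal{Z}}_1:=\mathcal{Z}_1\cap\mathcal{T},\qquad \hat{\mathcal{Z}}_n:=\bigl(\mathcal{Z}_n\cap\mathcal{T}\bigr)\setminus\bigcup_{j<n}\mathcal{Z}_j .
\]
These sets are pairwise disjoint. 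Each is Borel, because each $\mathcal{Z}_n$ is $\sigma$-compact: it is the image of the compact set $K_{vq}$ under the multivalued map $z\mapsto\llbracket a(z),b(z)\rrbracket$, whose graph is a Borel, indeed $\sigma$-compact, subset of $\operatorname{graph}(\mathcal{R})$ by the lemmas on $\mathcal{P},\mathcal{P}^{-1},\mathcal{R}$ and on $a,b$, intersected with closed sublevel slabs of the continuous $u$. Moreover $\hat{\mathcal{Z}}_n\subset\mathcal{Z}_n$, so this is a refinement.

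The main point is the covering. We take $y\in\mathcal{T}$. By Feldman--McCann's Lemma 9, $y$ lies in the interior of a unique ray $\mathcal{R}(y)$, and $u$ restricted to $\mathcal{R}(y)$ is an isometry onto an interval $[u(b(y)),u(a(y))]$ of positive length containing $u(y)$ in its interior. We choose a grid level $v$ strictly inside this interval with $|u(y)-v|\le\delta/2$; this is possible since the grid has mesh at most $\delta/4$. The ray then meets $S_v$ at a single interior point $z$. At $z$ we need $u$ to be differentiable with $\nabla u(z)\neq0$. On interior points of rays this holds by Feldman--McCann, since $u$ is differentiable at interior ray points with $|\nabla u|=1$. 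Hence $z\in S_v\cap F=\bigcup_i S_{vi}$.

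We expect the main obstacle to be the Lusin step, which only gives $z\in\bigcup_q K_{vq}$ up to an $\mathcal{H}^{n-1}$-null subset of $S_v\cap F$. So we cannot claim an exact pointwise cover. Instead we will prove that the union of the rays through the exceptional set $N_v:=(S_v\cap F)\setminus\bigcup_q K_{vq}$ is $\operatorname{Vol}_g$-negligible. For this we use the ray-cluster estimate of Feldman--McCann (Lemma 21/22, based on the Lipschitz parametrization of Lemma~\ref{lemma : bi-lipschitz parametrization of levelsets}). A cluster $\mathcal{T}_{vij}(B)$ has volume bounded by $C_j\,\mathcal{H}^{n-1}(B)$, with $C_j$ controlled by curvature via Jacobi field comparison. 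Hence clusters over null sets $B$ are null, and a countable union over $i,j,v$ remains null.

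Thus the partition covers $\mathcal{T}$ up to a $\operatorname{Vol}_g$-null set $N$. To obtain a partition of all of $\mathcal{T}$, we adjoin $N\cap\mathcal{T}$ as one extra Borel piece; alternatively, we enlarge the $K_{vq}$ by countably many further compact pieces. Either choice is harmless for the later disintegration, which is only needed $\operatorname{Vol}_g$-a.e. Finally, $l$, $m$ and $k$ are discrete, and for each $k$ the dyadic slabs $\bigl[v+\frac{l-1}{2^k},v+\frac{m+1}{2^k}\bigr]$ with $l<m$ cover $[v-\delta/2,v+\delta/2]$, so the $u$-constraint causes no further loss.
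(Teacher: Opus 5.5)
Your argument is correct in substance, but it is organized differently from the paper's. The paper fixes one dyadic scale $2^{-k}<\delta/2$, takes base levels $v=m/2^k$ with the disjointified bases $\hat{K}_{mq}$, and uses the half-open slabs $\{m/2^k\le u<(m+1)/2^k\}$. Disjointness is then built in: distinct $m$ give disjoint slabs, distinct $q$ give disjoint bases, and interior points lie on a unique ray. Each piece $\mathcal{Z}'_{m,q}$ is already a cylinder over a single base with a fixed $u$-window, which is the format later used for $\mathcal{K}$ and for the constants $h^{\pm}_{qi}$ in the Main Theorem.

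You instead enumerate the family and disjointify by set differences, which gives a disjoint Borel refinement for free. The price is that on a given ray the surviving $u$-range of $\hat{\mathcal{Z}}_n$ is the window of $\mathcal{Z}_n$ minus the windows of earlier sheaves containing that ray. That is a ray-dependent finite union of intervals, so before applying the cylinder disintegration you must split the base further, or put $\chi_{\hat{\mathcal{Z}}_n}$ into the test function on the full cylinder.

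What you gain is an actual covering proof. The paper only asserts that the ray through $y$ meets $\bigcup_q\hat{K}_{mq}$ at the grid level below $u(y)$. This tacitly assumes that the ray reaches that level at an interior point, and that the crossing avoids the $\mathcal{H}^{n-1}$-null Lusin remainder. You instead pick the level inside the ray's $u$-range, and you dispose of the remainder with the Feldman--McCann fact that clusters $\mathcal{T}_{vij}(B)$ over $\mathcal{H}^{n-1}$-null $B$ are $\operatorname{Vol}_g$-null.

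For the first point the levels must genuinely be dense, e.g.\ all dyadic rationals. Your example $v\in\frac{\delta}{4}\mathbb{Z}$ and the justification ``mesh at most $\delta/4$'' fail for rays whose $u$-range sits strictly between consecutive grid points, and nothing prevents such short rays from filling positive volume. This is exactly the defect of the paper's single fixed scale.

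Finally, use the first of your two options for the leftover null set. Lusin's theorem does not let you cover a prescribed null set by further compact pieces on which $a$, $b$, $\nabla u$ are continuous.
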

\begin{proof}
Define a set of discrete points:
\[
\Gamma_k=\left\{\frac{m}{2^k}: m \in \mathbb{Z}\right\}.
\]
This set partitions the range of $u$ into disjoint half-open intervals $I_m=\left[m / 2^k,(m+1) / 2^k\right)$. Choose $k$ large enough so that the injectivity constraint is automatically satisfied, since $|u(y)- m/2^k|\leq 1/2^k < \delta/2$. The disjoint sheaf sets are then indexed by
\[
\mathcal{Z}'_{m,q} = \left\{y: u(y) \in\left[\frac{m}{2^k}, \frac{m+1}{2^k}\right)\right\} \cap \bigcup_{z \in \hat{K}_{mq}}\llbracket a(z), b(z)\rrbracket.
\]
We claim that the family $\{\mathcal{Z}'_{m,q}\}$ covers almost all of $\mathcal{T}$. Let $y \in \mathcal{T}$ be arbitrary. There exists a unique integer $m$ such that $u(y) \in\left[m / 2^k,(m+1) / 2^k\right)$. Let $\mathcal{R}(y)$ be the transport ray passing through $y$. Then $\mathcal{R}(y) \cap \hat{K}_{mq}=\{z\}$ for a unique compact base $\hat{K}_{mq}$, and the point $z$ lies at positive distance from each end of the ray.

In fact, if we use the closed intervals $\left[m / 2^k,(m+1) / 2^k\right]$ instead, we still cover almost all of $\mathcal{T}$, since
\[
\mathcal{H}^n\left(\mathcal{Z}'_{m, q} \cap \mathcal{Z}'_{m+1, q^{\prime}}\right)=0.
\]
\end{proof}

We now turn to a reparametrization of the sheaf sets, a definition that is easier to work with. In the remainder of this paper, our focus will be on the cylinder set.
\begin{defn}\label{def: cylinder set}
(Cylinder Set).
Consider the flow function moving points along the transport rays,
\[
\Phi: Z \times \left[h^{-}, h^{+}\right] \to M, \qquad \Phi(p,t)=\exp_p\bigl(t\nabla u(p)\bigr).
\]
Here $Z:=\hat{K}_{u_0q}$ is a compact base set, consisting of points where the transport rays intersect the level set of $u$.
We define the cylinder set subordinated to the compact base $Z$ by
\[
\mathcal{K}:=\Phi\bigl(Z\times[h^-,h^+]\bigr)\cap\bigcup_{z\in Z}\llbracket a(z), b(z)\rrbracket,
\]
where $Z\subset\{x\in M: u(x)=u_0\}$ is compact and $h^-<0<h^+$.
A cylinder is simply a reparametrization of a sheaf set, and therefore forms a partition of the transport set.
\end{defn}

\begin{figure}[t]
\centering
\begin{tikzpicture}[x=1cm,y=1cm,scale=0.95]

  \path[fill=gray!25,draw=gray!70,line width=0.6pt]
    (4.72,0.95) .. controls (4.45,1.70) and (4.45,2.55) .. (4.72,3.25)
    .. controls (4.78,3.42) and (4.90,3.53) .. (5.05,3.58)
    .. controls (5.25,3.64) and (5.43,3.50) .. (5.50,3.25)
    .. controls (5.78,2.50) and (5.78,1.68) .. (5.50,0.95)
    .. controls (5.43,0.75) and (5.25,0.63) .. (5.05,0.66)
    .. controls (4.90,0.68) and (4.78,0.79) .. (4.72,0.95) -- cycle;
  \draw[dashed,gray!70,line width=0.7pt]
    (4.80,1.05) .. controls (4.55,1.75) and (4.55,2.45) .. (4.80,3.15);
  \draw[dashed,gray!70,line width=0.7pt]
    (5.42,1.05) .. controls (5.67,1.75) and (5.67,2.45) .. (5.42,3.15);

  \path[fill=blue!35,draw=blue!60!black,dashed,line width=0.6pt]
    (4.86,1.35) .. controls (5.05,1.15) and (5.34,1.18) .. (5.45,1.42)
    .. controls (5.60,1.78) and (5.58,2.55) .. (5.38,2.88)
    .. controls (5.22,3.12) and (4.95,3.12) .. (4.82,2.88)
    .. controls (4.62,2.55) and (4.62,1.78) .. (4.78,1.42)
    .. controls (4.81,1.37) and (4.83,1.36) .. (4.86,1.35) -- cycle;
  \node at (5.10,2.15) {$Z$};

  \foreach \yy/\a in {1.35/0.26,1.75/0.22,2.15/0.18,2.55/0.14,2.95/0.10} {
    \draw[red!75!black,line width=0.65pt]
      (-1.1,\yy)
      .. controls (1.2,\yy+0.55+2*\a) and (3.2,\yy-0.45-2*\a) .. (4.95,\yy)
      .. controls (6.6,\yy+0.50+2*\a) and (8.2,\yy-0.40-2*\a) .. (9.6,\yy+0.08);
  }

  \node[anchor=east] at (-1.15,1.20) {$\mathcal{Z}$};
\end{tikzpicture}
\caption{schematic drawing of a sheaf set: the transport rays are mutually disjoint geodesics (red). They pierce the hypersurface slice (gray). The set $Z$ is a subset of a level set of $u$, and $Z$ is a compact base set.}
\label{fig:riemannian-geodesics-slice}
\end{figure}
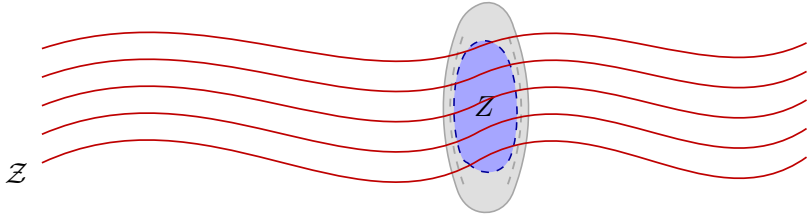
\subsection{Approximation using Jacobi fields}
Recall Proposition~15 in \cite{feldman2001monge}: $\nabla u$ is Lipschitz on each level set of $u$; hence, for each fixed $t$, the map
\[
\Phi^t: Z\to M,\qquad \Phi^t(p):=\Phi(p,t),
\]
is Lipschitz.
Our goal is to estimate the expansion factor of the compact set $Z$ under the flow $\Phi^t$ along the cylinder. By Rademacher's theorem, this factor, interpreted as the Jacobian determinant of $
\Phi^t$, is given by:
\[
\mathcal{H}^{n-1}\bigl(\Phi^t(Z)\bigr) = \int_Z J_{n-1}(\Phi^t)(p) \, d\mathcal{H}^{n-1}(p).
\]
To approximate this quantity, we adapt a method from Caravenna's construction \cite{caravenna2011a} to the Riemannian setting. We will use the approximate tangent space to $Z$ (since it is $(n-1)$-rectifiable) together with Jacobi fields in the ambient manifold.
Let us start with a sheaf set (cylinder) $\mathcal{Z}_e(Z)$ with compact basis $Z$ as follows:
$$\mathcal{Z}_e = \bigcup_{y \in Z} \llbracket a(y) ,b(y)\rrbracket,$$
where
$$u(y) - u(a(y)) = -d(y,a(y)) < h^- < 0,\quad u(y) -u(b(y)) = d(y,b(y)) > h^+ > 0.$$
Approximate the potential $u$ with the sequence of potentials $\{u_I\}_{I \in \mathbb{N}}$ by choosing a dense sequence $\{a_i\}_i \subset \Phi_{h^-}(Z)$:
$$u_I(x) = \max \left\{u(a_i) -d(x,a_i): i= 1,\dots,I\right\}.$$
Since $u: M \to \mathbb{R}$ is $1$-Lipschitz, it is uniformly continuous on $\Phi_{h^-}(Z)$. Notice that $\{u_I(x)\}$ is monotonically increasing and $u_I(x) \leq u(x)$. We first show that $u_I(x)$ converge pointwise to $u(x)$ and we then show this convergence is uniform. Since $\{a_k\}_{k \in \mathbb{N}}$ is a dense set, choose $x \in \overline{\mathcal{Z}_e \cap \{\Phi_t(Z) : t \geq h^-\}}$ such that $d(a_k, x) < \varepsilon/2$.
Notice that we have
$$u(a_k) \geq u(x) - d(x,a_k).$$
Then
$$u(a_k) - d(x,a_k) \geq u(x) - 2d(x,a_k) > u(x) - \varepsilon.$$
Choose $I \geq k$ and taking maximum over all $j = \{1,\dots, I\}$ of both sides:
$$u_I(x) > u(x) - \varepsilon.$$
Therefore $u_I$ converge pointwise to $u$ on the closure of $\mathcal{Z}_e \cap \{\Phi_t(Z) : t \geq h^-\}$. By Dini's theorem, $u_I$ is uniform convergent to $u$ on the closure of $\mathcal{Z}_e \cap\left\{\Phi_t(Z): t \geq h^{-}\right\}$.

Consider now the vector fields of ray directions
$$d_I(x) = \sum_{i=1}^I d^i(x) \chi_{\Omega^I_i}(x) ,\quad d^i(x) = -\nabla_x d(a_i,x).$$
where the open sets $\Omega^I_i$ are
$$
\begin{aligned}
\Omega_i^I & =\left\{x \in M: u\left(a_i\right)-d(x,a_i)>u\left(a_j\right)-d(x,a_j), j \in\{1 \ldots I\} \backslash i\right\} \\
& =\text { interior of }\left\{x: u\left(a_i\right)=u_I(x)+d(x,a_i)\right\},
\end{aligned}
$$
and the $(n-1)$-$\mathcal{H}^{n-1}$ rectifiable boundary can be written as
$$
\begin{aligned}
J_I & =\bigcup_{i \neq j}\left(\bar{\Omega}_i \cap \bar{\Omega}_j\right) \\
& =\left\{x: \exists i, j, i \neq j, u\left(a_i\right)-d(a_i,x)=u\left(a_j\right)-d(a_j,x)\right\}.
\end{aligned}
$$
Notice that when $I \to \infty$, $\Omega^I_i$ as a set converges $\mathcal{L}^n$-a.e. to $P(a_i)$. Indeed, 
\begin{align*}
&\Omega^I_i =\left\{x \in M : u(a_i)- d(x,a_i) = \max_{1 \leq j \leq I}\left\{u(a_j) - d(x,a_j)\right\}\right\}\\
&\stackrel{I\to \infty}{\longrightarrow} \left\{x \in M : u(a_i) - d(x,a_i) = u(x)\right\} = \mathcal{P}(a_i),
\end{align*}
which is a transport ray starting from $a_i$. 

Since the closure of $\mathcal{Z}_e \cap \{\Phi_t(Z) : t \geq h^-\}$ is compact, let $\{(d_I)_j\}_{j \in \mathbb{N}}$ be a selection of subsequences with corresponding convergent subsequences of $\{a_{i_j}\}_{j \in \mathbb{N}}$. Hence 
$$\lim_{j \to \infty} (d_I)_j (x) = -\lim_{j \to \infty}\nabla d(a_{i_j},x) = \bar{d} = -\nabla d(a,x), \text{ with } a \in \Phi_{h^-}(Z). $$
Notice that $\{u_{I_j}\}$ defines a sequence of potentials and
$$
u_{I_j}\left(\left(a_i\right)_j\right)=u_{I_j}(x)+d\left(x,\left(a_i\right)_j\right)
$$
for all $x$ on the ray defined by the approximating potential.
By taking the limit of both sides, we get $u(a) = u(x) + d(x,a)$.

Suppose two different subsequences
converge to two limits:
$$
a_{i_j} \rightarrow a \text { and } a_{i_k^{\prime}} \rightarrow a^{\prime}.
$$
From the definition of $u_{I_j}$, 
$$u(a)- d(x,a) = u(a') - d(x,a').$$
But this is only true if $a$ and $a'$ are on the same transport ray (geodesic). Since $a,a' \in \Phi_{h^-}(Z)$, it must be that $a =a'$. All convergent subsequences converge to the same limit, and therefore $d_I$ converges $\mathcal{L}^n$-a.e. to $d = \bar{d}$ on this $d$-cylinder.

Define the map $\Phi^{-t}_I$ similarly to $\Phi_t$ moves points along the rays relative to $d_I$. Within $\Omega^I_i$ the map $\Phi^{-t}_I$ moves points toward $a_i$ for $i \leq I$. As a consequence, for $S \subset \Omega^I_i \cap [\Phi_{h}(Z)]$ and $h-h^- > t \geq 0$, we can compute a contraction factor. Indeed, since there exists a bi-Lipschitz homeomorphism from $S$ to its image in $\mathbb{R}^{n-1}$, $S$ is $\mathcal{H}^{n-1}$-rectifiable. This means that for $\mathcal{H}^{n-1}$-almost every point $y \in S$, there exists an approximate tangent space $V \subset T_y M$, which is an ($n-1$)-dimensional linear subspace:
$$V \subset \mathbb{R}\dot{\gamma}(0) \oplus \langle \nabla_{\dot{\gamma}(0)}V_2,\dots \nabla_{\dot{\gamma}(0)}V_n\rangle,$$
where $\gamma : [0,r] \to M$ is the radial geodesic connecting $y$ to $a_i$. We can write
$$\Phi^{-t}_I(y) = \exp_{a_i}\left(\frac{r(y)-t}{r(y)}\exp^{-1}_{a_i}(y)\right),\quad r(y) = d(a_i, y).$$
We will give a lower bound for the Jacobian:
$$
\operatorname{Jac}\left(\Phi^{-t}_I|_S\right)(y):=\sqrt{\operatorname{det}\left(\left(\left.L\right|_V\right)^*\left(\left.L\right|_V\right)\right)}
$$
where $L=\left(d \Phi_I^{-t}\right)_y: T_y M \rightarrow T_{\Phi_I^{-t}(y)} M$ is the linear differential of the ambient map. Let us start with a 1-parameter family of geodesics
$$\Gamma(\tau,s) = \gamma_\tau(s) = \exp_{a_i}\left(s \frac{\exp^{-1}_{a_i}(c(\tau))}{d(a_i,c(\tau))}\right),$$
where $c: (-\varepsilon,\varepsilon) \to M$ is a curve such with initial condition $c(0) = y$ and $c'(0) = v \in \operatorname{Tan}(y,S)$. We define $r(\tau) = d(a_i,c(\tau))$ with $r(0)= d(a_i,y) = r_y$. The Jacobi field \cite{jost2017riemannian} along $\gamma(s)$ is defined as
$$J(s) = \frac{\partial\Gamma}{\partial\tau}(0,s).$$
Although the endpoints of the geodesics depend on $\tau$, we can extend the domain of $\Gamma$ to $(-\varepsilon, \varepsilon) \times[0, R]$ for sufficiently large $R$.
We have $J(0) = 0$ and $\Gamma(\tau,r(\tau)) = c(\tau)$. Differentiate this condition with respect to $\tau$ at $0$:
$$\frac{\partial \Gamma}{\partial s}(0,r(0)) \cdot \frac{\partial r}{\partial \tau}(0) + J(r(0)) = c'(0),$$
which is
$$\gamma'(r(0)) \langle v, \nabla_y d(a_i, y)\rangle + J(r(0)) = v.$$
Hence
$$J(r_y) = v - \langle v, \nabla_y d(a_i,y)\rangle \dot{\gamma}(r_y)= v^\perp.$$
Therefore at $s = r_y$, the Jacobi field is the component of $v$ orthogonal to the ray $\gamma(s)$.
To use the Rauch comparison theorem, we also need to check that $\langle J(s), \dot{\gamma}(s) \rangle = 0$ for all $t \in [0,r_y]$. But this follows immediately from the fact that $\langle J(s), \dot{\gamma}(s) \rangle$ is linear.
The differential $d(\Phi^{-t}_I)_y$ acting on $v$ is given by:
\begin{align*}
d(\Phi^{-t}_I)_y(v) &= \frac{d}{d\tau}|_{\tau = 0}\Gamma(r(\tau)-t,c(\tau)) = \frac{\partial \Gamma}{\partial \tau}(r(0)-t,0) + \frac{\partial}{\partial s}(r(0)-t,0) \frac{\partial(r(\tau-t))}{\partial \tau}|_{\tau = 0}\\
&= J(r-t) + \dot{\gamma}(r-t) \langle v, \nabla d(a_i,y)\rangle\\
&= J(r-t) + \dot{\gamma}(r-t)\langle v,\dot{\gamma}\rangle.
\end{align*}
By orthogonality, the squared norm is
$$\left\|d(\Phi^{-t}_I)_y(v)\right\|^2 = \|J(r-t)\|^2 + \langle v, \dot{\gamma}(r) \rangle^2. $$
By the Bishop--Gromov  theorem \cite{bishop1963relation}, $s \mapsto \|J(s)\|/\mathfrak{s}_K(s)$ is a non-increasing function. Therefore,
\[
\|J(r-t)\| \geq \frac{\mathfrak{s}_K(r-t)}{\mathfrak{s}_K(r)}\|J(r)\|.
\]
With $\|v^{\perp}\|^2=\|v\|^2-\langle v, \dot{\gamma}(r)\rangle^2$, we obtain the lower bound
\[
\bigl\|d(\Phi^{-t}_I)_y(v)\bigr\|^2 \ge \left(\frac{\mathfrak{s}_K(r-t)}{\mathfrak{s}_K(r)}\right)^2\|v\|^2+\left(1-\left(\frac{\mathfrak{s}_K(r-t)}{\mathfrak{s}_K(r)}\right)^2\right)\langle v, \dot{\gamma}(r)\rangle^2.
\]
Define the unit normal vector $\mathbf{n}_S:=\nabla u$ so that $\operatorname{Tan}(y,S)\perp \mathbf{n}_S(y)$. Consider the decomposition $\operatorname{Tan}(y,S)=U\oplus W$, where
\begin{itemize}
    \item $U:=\left\{v\in\operatorname{Tan}(y,S): \langle v,\dot{\gamma}(r)\rangle=0\right\}$. With the additional constraint $\langle v,\mathbf{n}_S\rangle=0$, we have $\dim(U)=n-2$ and
    \[
    \bigl\|d(\Phi^{-t}_I)_y(v)\bigr\|^2 \ge \left(\frac{\mathfrak{s}_K(r-t)}{\mathfrak{s}_K(r)}\right)^2,\qquad \forall v\in U,\ \|v\|=1.
    \]
    \item $W:=\operatorname{Span}\{\mathbf{n}_S,\dot{\gamma}(r)\}$. Let $\theta$ be the angle between $\mathbf{n}_S$ and $\dot{\gamma}(r)$, so that
    \[
    \cos\theta=\langle \mathbf{n}_S,\dot{\gamma}(r)\rangle_y.
    \]
    Then $\langle v,\dot{\gamma}(s)\rangle=\sin\theta$ for all $v\in W$, and hence
    \begin{align*}
    \bigl\|d(\Phi^{-t}_I)_y(v)\bigr\|^2 &\ge \left(\frac{\mathfrak{s}_K(r-t)}{\mathfrak{s}_K(r)}\right)^2 + \left(1-\left(\frac{\mathfrak{s}_K(r-t)}{\mathfrak{s}_K(r)}\right)^2\right)\sin^2\theta\\
    &= \left(\frac{\mathfrak{s}_K(r-t)}{\mathfrak{s}_K(r)}\right)^2\cos^2\theta + \sin^2\theta.
    \end{align*}
\end{itemize}
Therefore, we obtain:
\begingroup
\setlength{\abovedisplayskip}{6pt}\setlength{\belowdisplayskip}{6pt}%
\setlength{\abovedisplayshortskip}{4pt}\setlength{\belowdisplayshortskip}{4pt}%
\[
\operatorname{Jac}(\Phi^{-t}_I)(y) \ge \left(\frac{\mathfrak{s}_K(r(y)-t)}{\mathfrak{s}_K(r(y))}\right)^{n-1}\sqrt{\cos^2\theta + \frac{\sin^2\theta}{\left(\frac{\mathfrak{s}_K(r(y)-t)}{\mathfrak{s}_K(r(y))}\right)^2}}.
\]
\endgroup
When analyzing a $\mathcal{K}$-cylinder, we have $\frac{\mathfrak{s}_K(r(y)-t)}{\mathfrak{s}_K(r(y))}\le 1$. Moreover, the flow $\Phi^t$ moves points along transport rays with unit speed (since $|\nabla u|=1$ on $\mathcal{T}$). Hence the distance between the two slices $\Phi^{h}(S)$ and $\Phi^{h^-}(S)$ equals $h-h^-$, which yields a uniform lower bound on the radial distance:
\begingroup
\setlength{\abovedisplayskip}{6pt}\setlength{\belowdisplayskip}{6pt}%
\setlength{\abovedisplayshortskip}{4pt}\setlength{\belowdisplayshortskip}{4pt}%
\[
 h-h^- = d\bigl(\Phi^h(S),\Phi^{h^-}(S)\bigr) = \inf_{y\in S} r(y).
\]
\endgroup
Thus a uniform lower bound is
\begingroup
\setlength{\abovedisplayskip}{6pt}\setlength{\belowdisplayskip}{6pt}%
\setlength{\abovedisplayshortskip}{4pt}\setlength{\belowdisplayshortskip}{4pt}%
\[
\operatorname{Jac}(\Phi^{-t}_I)(y) \ge \left(\frac{\mathfrak{s}_K(h-t-h^-)}{\mathfrak{s}_K(h-h^-)}\right)^{n-1}.
\]
\endgroup
Consequently,
\begingroup
\setlength{\abovedisplayskip}{6pt}\setlength{\belowdisplayskip}{6pt}%
\setlength{\abovedisplayshortskip}{4pt}\setlength{\belowdisplayshortskip}{4pt}%
\begin{equation}\label{eq: approximation by I}
\mathcal{H}^{n-1}\bigl(\Phi^{-t}_I(S)\bigr)
\ge \left(\frac{\mathfrak{s}_K(h-t-h^-)}{\mathfrak{s}_K(h-h^-)}\right)^{n-1} \mathcal{H}^{n-1}(S).
\end{equation}
\endgroup
If $K=0$, we recover (2.9) in \cite{caravenna2011a}:
\begingroup
\setlength{\abovedisplayskip}{6pt}\setlength{\belowdisplayskip}{6pt}%
\setlength{\abovedisplayshortskip}{4pt}\setlength{\belowdisplayshortskip}{4pt}%
\[
\mathcal{H}^{n-1}\bigl(\sigma_{d_I}^{-t} S\bigr)=\left(\frac{h-t-h^{-}}{h-h^{-}}\right)^{n-1} \mathcal{H}^{n-1}(S).
\]
\endgroup

\begin{remark}
Equation~\eqref{eq: approximation by I} is the point where the assumption of 
positive sectional curvature is used.
Indeed, Lemma~\ref{lemma: Absolutely continuous pushforward} requires a lower bound ($\geq$) of the form
\eqref{eq: approximation by I}, which is obtained via the comparison estimate
$\|J(r-t)\|\ge \frac{\mathfrak{s}_K(r-t)}{\mathfrak{s}_K(r)}\|J(r)\|$ along transport rays.
Moreover, the monotonicity of the factor $\frac{\mathfrak{s}_K(r-t)}{\mathfrak{s}_K(r)}$ in $r$
allows us to replace $r(y)$ by the uniform bound $\inf_{y\in S} r(y)=d\bigl(\Phi^h(S),\Phi^{h^-}(S)\bigr)$, i.e.
by the distance between two slices of the cylinder.
\end{remark}

\begin{lemma}\label{lemma: Absolutely continuous pushforward}
(Absolutely continuous pushforward) For $h^- < s \leq t < h^+$, the following inequality holds:
\begin{align*}\label{eq: absolutely continuous pushforward}
\left(\frac{\mathfrak{s}_K(h^+-t)}{\mathfrak{s}_K(h^+-s)}\right)^{n-1}
\mathcal{H}^{n-1}\left(\Phi^sS\right) 
&\leq \mathcal{H}^{n-1}\left(\Phi^tS\right)\\
&\leq
\left(\frac{\mathfrak{s}_K(t-h^-)}{\mathfrak{s}_K(s-h^-)}\right)^{n-1}
\mathcal{H}^{n-1}\left(\Phi^sS\right),
\quad \forall S \subset Z.
\end{align*}
\end{lemma}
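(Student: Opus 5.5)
Both inequalities will follow from the approximate contraction estimate \eqref{eq: approximation by I}, applied once in each direction along the rays and then passed to the limit $I\to\infty$.

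For the upper bound we use the potentials $u_I$ built from a dense sequence $\{a_i\}$ in the lower slice $\Phi_{h^-}(Z)$. The construction is repeated with base slice $\Phi^t S$ in place of $S$, i.e.\ with $h=t$. Then \eqref{eq: approximation by I}, applied with contraction time $t-s$, gives
\[
\mathcal{H}^{n-1}\bigl(\Phi_I^{-(t-s)}(\Phi^tS)\bigr)\ge\left(\frac{\mathfrak{s}_K(s-h^-)}{\mathfrak{s}_K(t-h^-)}\right)^{n-1}\mathcal{H}^{n-1}(\Phi^tS).
\]
This is first done on each piece $\Phi^tS\cap\Omega_i^I$ and then summed over $i$. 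The pieces are disjoint up to the $\mathcal{H}^{n-1}$-rectifiable interface $J_I$, and we restrict to the part of $\Phi^tS$ off $J_I$; since the rays are disjoint, the images of different pieces overlap only negligibly. Rearranging gives the right-hand inequality, provided we can show
\[
\limsup_{I\to\infty}\mathcal{H}^{n-1}\bigl(\Phi_I^{-(t-s)}(\Phi^tS)\bigr)\le\mathcal{H}^{n-1}(\Phi^sS).
\]

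For the lower bound the same argument is run in the opposite direction. We choose a dense sequence $\{b_i\}$ in the upper slice $\Phi_{h^+}(Z)$ and set $u^I(x)=\min_i\{u(b_i)+d(x,b_i)\}$. The Jacobi field computation is repeated with $\gamma$ now emanating from $b_i$, and the flow moves $\Phi^sS$ forward toward $b_i$ by time $t-s$. The radial distance is then $r=h^+-s$, so Rauch/Bishop comparison yields the factor $\bigl(\mathfrak{s}_K(h^+-t)/\mathfrak{s}_K(h^+-s)\bigr)^{n-1}$. Passing $I\to\infty$ gives the left-hand inequality. Before either argument, we reduce to $S$ compact by inner regularity of $\mathcal{H}^{n-1}$ restricted to the rectifiable set $Z$. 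This is legitimate because $\Phi^t$ is Lipschitz on level sets (Proposition 15 of Feldman--McCann), so it maps $\mathcal{H}^{n-1}$-null sets to null sets.

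The main obstacle is the limit step, i.e.\ showing that $\Phi_I^{-(t-s)}(\Phi^tS)$ converges to $\Phi^sS$ in a sense under which $\mathcal{H}^{n-1}$ is upper semicontinuous. Hausdorff measure is not upper semicontinuous under Hausdorff convergence in general, so the plan is to work with an $\varepsilon$-neighbourhood instead. We use the established pointwise convergence of $d_I$ to $-\nabla d(a,\cdot)$, together with uniform convergence $u_I\to u$ and the fact that, for compact $S$ on the cylinder, the limit point $a$ is the unique point of $\Phi_{h^-}(Z)$ on the ray. From these we deduce that $\Phi_I^{-(t-s)}\to\Phi^{s-t}$ pointwise, and hence uniformly on compact $\Phi^tS$ by equicontinuity, since the maps are uniformly Lipschitz by the Jacobi bounds. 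The images therefore lie in $(\Phi^sS)_\varepsilon$. Using the uniform Lipschitz bounds, we then estimate $\mathcal{H}^{n-1}$ of these images via coverings of $\Phi^sS$: any cover of $\Phi^sS$ by small balls pulls back through the uniformly bi-Lipschitz correspondence. Letting $\varepsilon\to0$ then gives the required $\limsup$ inequality. If this turns out to be too delicate, the fallback is to replace $\mathcal{H}^{n-1}$ by the $(n-1)$-dimensional Minkowski content on compact rectifiable sets, which is upper semicontinuous in this sense, and only at the end identify the two.
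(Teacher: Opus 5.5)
Your two-sided scheme is the paper's own: \eqref{eq: approximation by I} with centres dense in the $h^-$-slice for the upper bound and in the $h^+$-slice for the lower bound, then $I\to\infty$. The gap is exactly at the step you flag, and your replacement does not close it.

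Uniform convergence plus uniform Lipschitz bounds cannot give $\limsup_I\mathcal{H}^{n-1}\bigl(\Phi_I^{-(t-s)}\Phi^tS\bigr)\le\mathcal{H}^{n-1}(\Phi^sS)$. Pulling a cover of $\Phi^sS$ back through an $L$-Lipschitz correspondence only bounds the left side by $L^{n-1}\mathcal{H}^{n-1}(\Phi^sS)$. Letting $\varepsilon\to0$ does not change $L$, and $L\to1$ would need first-order convergence of the approximating flows, which uniform convergence does not give.

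Concretely, let $\phi$ be the $1$-periodic tent function of slope $\pm1$ and $f_I(x)=\bigl(x,\tfrac1I\phi(Ix)\bigr)$. These maps are $\sqrt2$-Lipschitz, have $1$-Lipschitz inverses on their images, and converge uniformly to $x\mapsto(x,0)$. Yet $\mathcal{H}^1(f_I([0,1]))=\sqrt2>1$. The Minkowski-content fallback fails the same way: these zigzags have Minkowski content $\sqrt2$ while the limit segment has $1$.

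A non-sharp factor $L^{n-1}$ would still give the null-set equivalence. It would not give the stated bounds, nor the Lipschitz dependence of $\tilde\alpha$ on $t$ that is deduced from them.

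Separately, $\Phi_I^{-(t-s)}$ is neither uniformly Lipschitz nor equicontinuous. It is defined cell by cell with different centres $a_i$ and jumps across $J_I$, and the Jacobi-field estimates are intra-cell lower bounds. Uniform convergence must come from Lusin and Egorov as in the paper, or from a compactness argument using continuity of $\nabla u$ on the compact base and uniqueness of the ray through an interior point.

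Your $\varepsilon$-neighbourhood idea is the right one, but it controls $n$-dimensional volume, not $\mathcal{H}^{n-1}$. The paper justifies the same step by ``semicontinuity of $\mathcal{H}^{n-1}$ w.r.t.\ Hausdorff convergence'', which, as you suspected, is false in general, so it does not supply the missing ingredient either. If $K$ is compact and $K_I\subset K_\varepsilon$ for large $I$, then $\limsup_I\operatorname{Vol}_g(K_I)\le\operatorname{Vol}_g(K)$, so work on slabs.

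Take a compact $E\subset Z$ on which $\nabla u$ is continuous, and set $B=\Phi(E\times[t,t+\delta])$. In polar coordinates about $a_i$, the map $\Phi_I^{-(t-s)}$ is $(\rho,\theta)\mapsto(\rho-(t-s),\theta)$. Up to a null set it sends each open cell injectively into itself, and $\rho\ge t-h^-$ on $B$. Bishop's comparison then gives $\operatorname{Vol}_g\bigl(\Phi_I^{-(t-s)}B\bigr)\ge\bigl(\mathfrak{s}_K(s-h^-)/\mathfrak{s}_K(t-h^-)\bigr)^{n-1}\operatorname{Vol}_g(B)$.

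Let $I\to\infty$ and apply the coarea formula ($|\nabla u|=1$) to both slabs. This gives $\bigl(\mathfrak{s}_K(s-h^-)/\mathfrak{s}_K(t-h^-)\bigr)^{n-1}\int_t^{t+\delta}\mathcal{H}^{n-1}(\Phi^\tau E)\,d\tau\le\int_s^{s+\delta}\mathcal{H}^{n-1}(\Phi^\tau E)\,d\tau$. Dividing by $\delta$ and letting $\delta\to0$ yields the upper bound whenever $s$ and $t$ are Lebesgue points of $\tau\mapsto\mathcal{H}^{n-1}(\Phi^\tau E)$. The lower bound is symmetric. Reaching every pair $s,t$ needs an additional continuity argument in $\tau$.
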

\begin{proof}
Fix $h^- < s \leq t \leq h^+$. Consider $S \subset Z$ and first assume that $\mathcal{H}^{n-1}(\Phi^tS)> 0$. Notice that $\left\{\Omega^I_i\right\}$ partition the $d$-cylinders, therefore we can write
$$
S=\bigcup_{i=1}^I\left(\Omega_i^I \cap S\right), \quad \Phi_{I}^t(S)=\bigcup_{i=1}^I \Phi_{I}^t\left(S \cap \Omega_i^I\right).
$$
By Lusin's theorem, one may assume that $\nabla u$ is a continuous map on $A_i \subseteq \Omega^I_i \cap [\Phi^t S]$ such that
$$
\mathcal{H}^{n-1}\left(A_i\right) \geq \mathcal{H}^{n-1}(S)-\frac{\eta}{I} .
$$
Define $A_\eta=\bigcup_{i=1}^I A_i$, which is a compact subset of $S$ and by finite additivity,
$$
\mathcal{H}^{n-1}\left(A_\eta\right)=\mathcal{H}^{n-1}\left(\bigcup_{i=1}^I A_i\right) \geq \mathcal{H}^{n-1}(S)-\eta.
$$
Let us choose dense sequences for each $\left\{a^i_k\right\}_{i=1}^I \in \Phi^{s-t} A_i$. Taking the union of all these dense sequence will give us a dense sequence in $\Phi^{s-t} A_\eta$. Approximate $d = \nabla u$ by $d_I$, then $d_I$ converges pointwise to $d= \nabla u$ on $A_\eta$. Without loss of generality, by Egorof's theorem, such convergence is uniform on each $A_i$. Since $A_\eta \subseteq \Phi^t(S)$, using the rescaling as above, we have
$$\mathcal{H}^{n-1}\left(\Phi^{s-t}_I A_\eta\right) \geq \left(\frac{\mathfrak{s}_K(s-h^-)}{\mathfrak{s}_K(t-h^-)}\right)^{n-1}\mathcal{H}^{n-1}(A_\eta).$$
By the semicontinuity of $\mathcal{H}^{n-1}$ w.r.t Hausdorff convergence, being $\Phi^{s-t}A_\eta \subseteq \Phi^s S$,
$$\limsup_{I \to \infty} \mathcal{H}^{n-1}\left(\Phi^{s-t}_I A_\eta\right) \leq \mathcal{H}^{n-1}\left(\Phi^{s-t}A_\eta\right) \leq \mathcal{H}^{n-1}\left(\Phi^sS\right).$$
Therefore we obtain
\begin{equation}\label{eq:absolutely-continuous-pushforward}
\mathcal{H}^{n-1}\left(\Phi^tS\right)\leq \left(\frac{\mathfrak{s}_K\left(t-h^{-}\right)}{\mathfrak{s}_K\left(s-h^{-}\right)}\right)^{n-1} \mathcal{H}^{n-1}\left(\Phi^s S\right).
\end{equation}
On the other hand, assume $\mathcal{H}^{n-1}\left(\Phi^sS\right) > 0$ and $h^- \leq s \leq t < h^+$. We truncate the right endpoints by considering a dense subset $\{b_i\}_i \subset \Phi^{h^+}Z$. Then we can find a similar approximation as \eqref{eq: approximation by I}:
$$\mathcal{H}^{n-1}\left(\Phi^{-t}_IS\right)\geq \left(\frac{\mathfrak{s}_K(h^+-h-t)}{\mathfrak{s}_K(h^+-h)}\right)^{n-1}\mathcal{H}^{n-1}(S).$$
Consider $A_\eta \subseteq \Phi^{s}S$ as above again and  $\Phi^{t-s}A_\eta \subseteq \Phi^t S$,
$$\mathcal{H}^{n-1}\left(\Phi^{t-s}_I A_\eta\right) \geq \left(\frac{\mathfrak{s}_K(h^+-t)}{\mathfrak{s}_K(h^+-s)}\right)^{n-1} \mathcal{H}^{n-1}(A_\eta).$$
By using the same semicontinuity of Hausdorff measure again,
$$\limsup_{I \to \infty} \mathcal{H}^{n-1}\left(\Phi^{t-s}_I A_\eta\right) \leq \mathcal{H}^{n-1}\left(\Phi^{t-s}A_\eta\right)\leq \mathcal{H}^{n-1}\left(\Phi^tS\right).$$
As a consequence, $\mathcal{H}^{n-1}\left(\Phi^sS\right) = 0$ if and only if $\mathcal{H}^{n-1}(\Phi^tS)=0$ for all $(s,t) \in (h^-,h^+)$.
\end{proof}
\begin{lemma}
The pushforward of the measure $\mathcal{H}^{n-1}\llcorner Z$ under the map $\Phi^t$ may be written as
\[
\begin{aligned}
\Phi^t_{\#}\bigl(\mathcal{H}^{n-1}\llcorner Z\bigr)
&=\alpha^t\,\bigl(\mathcal{H}^{n-1}\llcorner \Phi^t(Z)\bigr),\\
\bigl(\Phi^{-t}\bigr)_{\#}\bigl(\mathcal{H}^{n-1}\llcorner \Phi^t(Z)\bigr)
&=\frac{1}{\alpha^t\circ \Phi^t}\,\bigl(\mathcal{H}^{n-1}\llcorner Z\bigr).
\end{aligned}
\]
Moreover, if $h^-<0<h^+$, then the $\mathcal{H}^{n-1}$-measurable function $\alpha^t$ satisfies the uniform bounds
\begin{align*}
\left(\frac{\mathfrak{s}_K(h^+-t)}{\mathfrak{s}_K(h^+)}\right)^{n-1}
&\le \frac{1}{\alpha^t} \le \left(\frac{\mathfrak{s}_K(t-h^-)}{\mathfrak{s}_K(-h^-)}\right)^{n-1},
\qquad \text{for } t\ge 0,\\
\left(\frac{\mathfrak{s}_K(t-h^-)}{\mathfrak{s}_K(-h^-)}\right)^{n-1}
&\le \frac{1}{\alpha^t} \le \left(\frac{\mathfrak{s}_K(h^+-t)}{\mathfrak{s}_K(h^+)}\right)^{n-1},
\qquad \text{for } t<0.
\end{align*}
\end{lemma}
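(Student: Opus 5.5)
The lemma follows from Lemma~\ref{lemma: Absolutely continuous pushforward} by a Radon--Nikodym argument. The flow $\Phi^t$ is injective on $Z$ for $h^-<t<h^+$: interior points of distinct transport rays are distinct, by Lemma~9 of \cite{feldman2001monge}, and each ray meets the level set $\{u=u_0\}$ in at most one point. The map $\Phi^t$ is also Lipschitz, by Proposition~15 of \cite{feldman2001monge}. Hence $\Phi^t(Z)$ is $\mathcal{H}^{n-1}$-measurable, and indeed $\sigma$-compact as a continuous image of compact pieces. Moreover $\Phi^t_{\#}(\mathcal{H}^{n-1}\llcorner Z)$ is a finite Borel measure concentrated on $\Phi^t(Z)$, and for $A\subset \Phi^t(Z)$ it is given by
\[
\Phi^t_{\#}(\mathcal{H}^{n-1}\llcorner Z)(A)=\mathcal{H}^{n-1}(\Phi^{-t}A),
\]
where $\Phi^{-t}$ denotes the inverse of $\Phi^t$ on its image.

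\textbf{Step 1: absolute continuity.} Apply Lemma~\ref{lemma: Absolutely continuous pushforward} with the pair of times $(0,t)$ when $t\ge 0$, and with the pair $(t,0)$ when $t<0$. Take $S=\Phi^{-t}A\subset Z$, so that $\Phi^0S=S$ and $\Phi^tS=A$. The two-sided inequality gives $\mathcal{H}^{n-1}(A)=0$ if and only if $\mathcal{H}^{n-1}(\Phi^{-t}A)=0$. Thus the measures $\Phi^t_{\#}(\mathcal{H}^{n-1}\llcorner Z)$ and $\mathcal{H}^{n-1}\llcorner\Phi^t(Z)$ are mutually absolutely continuous. The second measure is $\sigma$-finite, since it is bounded above by a constant times the first measure, which is finite because $Z$ is compact and rectifiable. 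Radon--Nikodym then yields a measurable density $\alpha^t>0$ with
\[
\Phi^t_{\#}(\mathcal{H}^{n-1}\llcorner Z)=\alpha^t\,(\mathcal{H}^{n-1}\llcorner\Phi^t(Z)).
\]
The second identity is the inverse statement. For $B\subset Z$,
\[
\int_{\Phi^t B}\frac{1}{\alpha^t}\,d\bigl(\alpha^t\,\mathcal{H}^{n-1}\bigr)=\mathcal{H}^{n-1}(\Phi^tB),
\]
and changing variables through $\Phi^t$ rewrites the left-hand side as $\int_B (1/\alpha^t\circ\Phi^t)\,d\mathcal{H}^{n-1}$. This gives the claimed formula for $(\Phi^{-t})_{\#}(\mathcal{H}^{n-1}\llcorner\Phi^t(Z))$.

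\textbf{Step 2: the bounds.} For $t\ge 0$, apply Lemma~\ref{lemma: Absolutely continuous pushforward} with $s=0$ to an arbitrary Borel $S\subset Z$:
\[
\Bigl(\tfrac{\mathfrak{s}_K(h^+-t)}{\mathfrak{s}_K(h^+)}\Bigr)^{n-1}\mathcal{H}^{n-1}(S)\le \mathcal{H}^{n-1}(\Phi^tS)\le \Bigl(\tfrac{\mathfrak{s}_K(t-h^-)}{\mathfrak{s}_K(-h^-)}\Bigr)^{n-1}\mathcal{H}^{n-1}(S).
\]
By Step 1, $\mathcal{H}^{n-1}(\Phi^tS)=\int_S (1/\alpha^t\circ\Phi^t)\,d\mathcal{H}^{n-1}$. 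Since $S$ is arbitrary, Lebesgue differentiation gives the pointwise bounds on $1/\alpha^t\circ\Phi^t$ for $\mathcal{H}^{n-1}$-a.e.\ point of $Z$. Concretely, integral bounds over all subsets force a.e.\ bounds on the density: apply the inequality to the sets where the density exceeds or falls below the constant. Transported back, these are the bounds on $1/\alpha^t$ on $\Phi^t(Z)$. For $t<0$, apply the lemma with $s=t$ and final time $0$, i.e.\ to the pair $(t,0)$, with $S$ replaced by $\Phi^{-t}$-preimages. The roles of the factors then swap, producing the reversed inequalities.

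\textbf{Main obstacle.} The delicate point is the bookkeeping in Step 2: Lemma~\ref{lemma: Absolutely continuous pushforward} is stated for sets of the form $\Phi^sS$ with $S\subset Z$. For $t<0$ one must use it with $s=t$, $t\to0$ and the set $\Phi^tS$, then invert the inequality. Getting exactly the stated constants may require monotonicity of $\mathfrak{s}_K$ on the relevant interval (valid since the cylinder length is below $\pi/\sqrt K$). I would check each endpoint case carefully and, if needed, replace constants by the monotone comparisons of $\mathfrak{s}_K(h^+-t)/\mathfrak{s}_K(h^+)$ against $\mathfrak{s}_K(t-h^-)/\mathfrak{s}_K(-h^-)$. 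Measurability of $\alpha^t$ comes for free from Radon--Nikodym.
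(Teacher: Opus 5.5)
Your proposal is correct and follows the paper's route. The paper disposes of this lemma in one line, from Lemma~\ref{lemma: Absolutely continuous pushforward} plus Radon--Nikodym, and you supply the missing details: injectivity of $\Phi^t$ on $Z$, mutual absolute continuity, the inverse formula by change of variables, and the passage from bounds on $\mathcal{H}^{n-1}(\Phi^t S)$ for all $S\subset Z$ to a.e.\ bounds on the density.

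The obstacle you flag does not arise. Substituting the time pairs $(0,t)$ for $t\ge 0$ and $(t,0)$ for $t<0$ directly into Lemma~\ref{lemma: Absolutely continuous pushforward} gives exactly the stated constants, with no monotonicity of $\mathfrak{s}_K$ needed. The only small repair is that $\mathcal{H}^{n-1}(Z)<\infty$ should be justified by the Lipschitz parametrization of level sets (Lemma 17 of Feldman--McCann), not by compactness and rectifiability alone.
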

This lemma follows from Lemma~\ref{lemma: Absolutely continuous pushforward} and the Radon--Nikodym theorem.
\begin{corollary}
The map $t \mapsto \Phi^t(z)$ is invertible in $t$ and Borel in $x \in Z$. Furthermore since
$\Phi^t$ preserves $\mathcal{H}^n$-null sets in both directions, i.e., for any $N \subset\left[h^{-}, h^{+}\right] \times Z$,
$$
\mathcal{H}^n(N)=0 \quad \Longleftrightarrow \quad \mathcal{H}^n\left(\Phi^t(N)\right)=0,
$$
it also
induces an isomorphism between the spaces of $\mathcal{H}^n$-measurable functions modulo $\mathcal{H}^n$-null sets.
\end{corollary}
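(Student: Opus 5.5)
The plan is to regard $\Phi:Z\times[h^-,h^+]\to\mathcal{K}$, $\Phi(z,t)=\Phi^t(z)$, as a Borel bijection. Then I will compute the pushforward of $\mathcal{H}^{n-1}\llcorner Z\otimes\mathcal{L}^1$ slice by slice, using the preceding lemma on $\alpha^t$. The claim that $t\mapsto\Phi^t(z)$ is invertible comes straight from the construction. On $\mathcal{T}$ we have $|\nabla u|=1$, and the rays are unit-speed geodesics along which $u$ is affine with slope $\pm1$, so $u(\Phi^t(z))=u_0-t$ (up to the sign convention). Hence $t$ is recovered as $u_0-u(\Phi^t(z))$. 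By Lemma 9 of Feldman--McCann an interior point lies on a unique ray, and $Z$ meets each ray in at most one point, so $z$ is recovered as well. Borel measurability in $z$ follows because $\nabla u\vert_Z$ is continuous (this was arranged by the Lusin step when building $K_{vq}$) and $\exp$ is smooth. Therefore $\Phi$ is Borel and injective, and its inverse is Borel by the Lusin--Souslin theorem.

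For the null-set equivalence, fix $N\subset Z\times[h^-,h^+]$ and write $N_t=\{z:(z,t)\in N\}$. The first step is a Fubini/coarea-type identity on $\mathcal{K}$. The level sets of $u$ slice $\mathcal{K}$ into the sets $\Phi^t(Z)$, and $|\nabla u|=1$ there, so the coarea formula for the Lipschitz function $u$ gives
\[
\mathcal{H}^n(\Phi(N))=\int_{h^-}^{h^+}\mathcal{H}^{n-1}\bigl(\Phi^t(N_t)\bigr)\,dt .
\]
On the product side we have $(\mathcal{H}^{n-1}\otimes\mathcal{L}^1)(N)=\int\mathcal{H}^{n-1}(N_t)\,dt$. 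This requires $N$ to be measurable; in general I would reduce to Borel hulls, using that $\Phi$ is Borel-bimeasurable. The second step applies Lemma~\ref{lemma: Absolutely continuous pushforward} with $s=0$ and $S=N_t$. For $t$ in a compact subinterval of $(h^-,h^+)$, it gives $c\,\mathcal{H}^{n-1}(N_t)\le\mathcal{H}^{n-1}(\Phi^t N_t)\le C\,\mathcal{H}^{n-1}(N_t)$, with $c,C>0$ given by the explicit $\mathfrak{s}_K$-ratios, equivalently by the bounds on $1/\alpha^t$. Integrating in $t$ shows the two integrals vanish simultaneously. The endpoint slices $t=h^\pm$ form an $\mathcal{L}^1$-null set of times, so they do not matter, and the degenerate constants there are harmless after exhausting $(h^-,h^+)$ by compact subintervals.

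Once null sets are preserved in both directions, the map $f\mapsto f\circ\Phi$ is well defined on equivalence classes, sends measurable functions to measurable functions, and has inverse $g\mapsto g\circ\Phi^{-1}$. This yields the claimed isomorphism between the spaces of $\mathcal{H}^n$-measurable functions modulo null sets.

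I expect the main obstacle to be the coarea/Fubini identity. It requires knowing that $u$ is differentiable with $|\nabla u|=1$ $\mathcal{H}^n$-a.e.\ on $\mathcal{K}$. It also requires that the slices $\Phi^t(Z)\cap\mathcal{K}$ coincide with the level sets $\{u=u_0-t\}\cap\mathcal{K}$. The first point is guaranteed on $\mathcal{T}$ by Feldman--McCann. For the second I would use $\operatorname{Vol}_g(\mathcal{T}_{\mathfrak{e}}\setminus\mathcal{T})=0$ to discard ray endpoints, and the truncation $\llbracket a(z),b(z)\rrbracket$ in the definition of $\mathcal{K}$ to ensure that the exponential map does not run past the ray ends. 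If the coarea route turns out to be awkward, I would instead use the area formula for the Lipschitz map $\Phi$: its Jacobian factor is $J_{n-1}\Phi^t$, since $\partial_t\Phi$ is the unit field orthogonal to the slice. This Jacobian is bounded above and below on compact time intervals by Lemma~\ref{lemma: Absolutely continuous pushforward}.
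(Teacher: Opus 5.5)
Your proposal is correct and follows essentially the same route as the paper: you slice the cylinder by level sets of $u$ with the coarea formula (using $|\nabla u|=1$ on $\mathcal{T}$), compare each slice $\Phi^t(N_t)$ with $N_t$ via the two-sided bounds of the absolutely continuous pushforward lemma (equivalently the density $\alpha^t$), and conclude with Fubini on $Z\times[h^-,h^+]$. You are in fact more thorough than the paper, which writes out only the implication from $\mathcal{H}^n$-nullity in $\mathcal{Z}$ to nullity of the preimage, and does not argue invertibility, Borel measurability of $\Phi$, or the measurability of $N$.
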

\begin{proof}
Let $\hat{u} = u - p$, where $p = u(z)$ for some $z \in Z$ (so $\hat{u} = 0$ on $Z$). Since $u$ is $1$-Lipschitz and satisfies $|\nabla u| = 1$ almost everywhere in $\mathcal{Z}$, the same holds for $\hat{u}$. Let $N \subset \mathcal{Z}$ be such that $\mathcal{H}^n(N) = 0$. Applying the coarea formula to $\hat{u}$ on $\mathcal{Z}$ yields:
$$0 = \int_{\mathcal{Z}}\chi_N(y) d\mathcal{H}^n(y) = \int^{h^+}_{h^-}\left\{\int_{\hat{u}^{-1}(t)} \chi_N(y) d\mathcal{H}^{n-1}(y)\right\}dt=\int^{h^+}_{h^-}\left\{\int_{\Phi^t(Z)} \chi_N(y) d\mathcal{H}^{n-1}(y)\right\}dt.$$
Recall the definition on Radon-Nikodym derivative of absolute pushforward, the above identity can be written as
\begin{align*}
\int^{h^+}_{h^-}\left\{\int_{\Phi^t(Z)} \chi_N(y) d\mathcal{H}^{n-1}(y)\right\}dt &= \int_{h^{-}}^{h^{+}}\left\{\int_{\Phi^t(Z)} \chi_{N(y)} d\left(\mathcal{H}^{n-1}\llcorner\Phi^t Z(y)\right)\right\} d t\\
&=\int_{h^-}^{h^+}\left\{\int_{\Phi^t(Z)}\chi_{N(y)}\frac{1}{\alpha^t(y)}d\left[(\Phi^t)_\#\mathcal{H}^{n-1}\llcorner Z(y)\right]\right\}dt\\
&= \int_{h^-}^{h^+}\left\{\int_{Z}\frac{1}{\alpha^t}(\Phi^t(y))d\left[\mathcal{H}^{n-1}\llcorner Z(y)\right]\right\}dt.
\end{align*}
Consequently, as $\alpha^t > 0$, for $\mathcal{H}^{n-1}$-a.e. we have that $H^{n-1}\left(\left\{y \in Z: \Phi^t(y) \in N\right\}\right)=0$.
Therefore, by Fubini's theorem,
$$\mathcal{H}^{n-1}\left((\Phi^t)^{-1}N\right) = \int_{[h^-,h^+]\times Z}\chi_{(\Phi^t)^{-1}N} d\mathcal{H}^n = \int_{h^-}^{h^+}\left\{\int_{\left\{y \in Z : \Phi^t(y) \in N\right\}}d\mathcal{H}^{n-1}\llcorner Z(y)\right\}dt = 0.$$
We conclude that $\Phi^t$ induces a Borel isomorphism between the spaces of Hausdorff measurable functions on $[h^-,h^+] \times Z$ and $\mathcal{Z}$.
\end{proof}
\begin{corollary}
The function
\[
\tilde{\alpha}(t,y):=\frac{1}{\alpha(t,y)}=\frac{\left(\Phi^{-t}\right)_{\#}\bigl(\mathcal{H}^{n-1}\llcorner\Phi^t Z\bigr)}{\mathcal{H}^{n-1}\llcorner Z}
\]
is measurable in $y$ and locally Lipschitz in $t$.
\end{corollary}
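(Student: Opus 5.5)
The plan is to obtain both properties of $\tilde\alpha(t,y)$ from Lemma~\ref{lemma: Absolutely continuous pushforward}, applied with every possible base point $s$, together with the semigroup property of the flow along rays: $\Phi^{t}=\Phi^{t-s}\circ\Phi^{s}$ on the cylinder. This identity holds because the points move with unit speed along the same transport ray and stay inside $\llbracket a(z),b(z)\rrbracket$.

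\emph{Measurability in $y$.} For each fixed $t$, $\tilde\alpha(t,\cdot)$ is a Radon--Nikodym derivative of two finite Borel measures on $Z$. The first is $(\Phi^{-t})_{\#}(\mathcal{H}^{n-1}\llcorner\Phi^tZ)$; it is absolutely continuous with respect to $\mathcal{H}^{n-1}\llcorner Z$ by Lemma~\ref{lemma: Absolutely continuous pushforward}, applied with $s=0$ to arbitrary $S\subset Z$. So $\tilde\alpha(t,\cdot)$ is $\mathcal{H}^{n-1}$-measurable. To obtain joint measurability in $(t,y)$, which is needed later for Fubini, I would define it as a limit of difference quotients:
\[
\tilde\alpha(t,y)=\lim_{r\to 0}\frac{\mathcal{H}^{n-1}\bigl(\Phi^t(Z\cap B_r(y))\bigr)}{\mathcal{H}^{n-1}\bigl(Z\cap B_r(y)\bigr)}.
\]
This limit exists $\mathcal{H}^{n-1}$-a.e.\ by the differentiation theorem on the rectifiable set $Z$. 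Each ratio is Borel in $(t,y)$, since $\Phi$ is Borel and $t\mapsto\mathcal{H}^{n-1}(\Phi^t S)$ is continuous by the next step. Therefore the $\limsup$ over rational $r$ is jointly Borel.

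\emph{Local Lipschitz continuity in $t$.} Fix a compact interval $[t_1,t_2]\subset(h^-,h^+)$ and $s\le t$ in it. Apply Lemma~\ref{lemma: Absolutely continuous pushforward} to $S=Z\cap B_r(y)$ and divide by $\mathcal{H}^{n-1}(S)$. Letting $r\to 0$ at Lebesgue points, and handling the countably many rational times first, this gives, for a.e.\ $y$,
\[
\Bigl(\tfrac{\mathfrak{s}_K(h^+-t)}{\mathfrak{s}_K(h^+-s)}\Bigr)^{n-1}\tilde\alpha(s,y)\le\tilde\alpha(t,y)\le\Bigl(\tfrac{\mathfrak{s}_K(t-h^-)}{\mathfrak{s}_K(s-h^-)}\Bigr)^{n-1}\tilde\alpha(s,y).
\]
The function $\log\mathfrak{s}_K$ is smooth on the compact subinterval of $(0,\pi/\sqrt K)$ relevant here. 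Here I use that cylinder heights are below $\delta<\operatorname{inj}$, which is below $\pi/\sqrt K$. So both ratios are $1+O(|t-s|)$, with constants depending only on $t_1,t_2,h^\pm,K,n$. Taking logarithms gives
\[
|\log\tilde\alpha(t,y)-\log\tilde\alpha(s,y)|\le C|t-s|
\]
first on rational $s,t$. The bound extends by density to all $s,t$, after redefining $\tilde\alpha(\cdot,y)$ as the unique Lipschitz extension; that extension agrees a.e.\ with the Radon--Nikodym derivative at each $t$, by the same inequality. Finally, $\tilde\alpha(s,y)$ is bounded above and below uniformly by the bounds of the preceding lemma on $1/\alpha^t$. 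So the Lipschitz bound on $\log\tilde\alpha$ transfers to $\tilde\alpha$.

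\emph{Main obstacle.} The delicate step is passing from set inequalities to pointwise inequalities for a single version of $\tilde\alpha$, valid for all $t$ outside one null set of $y$. The exceptional null sets depend on $t$, so I would first work on a countable dense set of times and then use the uniform Lipschitz bound to define $\tilde\alpha(t,y)$ for all $t$ by continuity. Justifying the semigroup identity $\Phi^t=\Phi^{t-s}\circ\Phi^s$, used implicitly in applying the lemma with base $s\neq0$, requires only that rays in the cylinder do not intersect in their interiors, which holds by Lemma 9 of \cite{feldman2001monge}.
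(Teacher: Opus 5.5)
Your proposal is correct and follows essentially the paper's route: apply Lemma~\ref{lemma: Absolutely continuous pushforward} to arbitrary $S\subset Z$, turn the two-sided set estimate into an $\mathcal{H}^{n-1}$-a.e.\ pointwise inequality between $\tilde\alpha(s,\cdot)$ and $\tilde\alpha(t,\cdot)$ on a countable dense set of times, and read off a local Lipschitz bound. The differences are minor. The paper compares the curvature ratios with the Euclidean ones (via monotonicity of $\sin(\sqrt{K}x)/x$) and then applies Bernoulli, while you use Lipschitz continuity of $\log\mathfrak{s}_K$ on a compact subinterval of $(0,\pi/\sqrt{K})$, and you also make explicit the extension from the dense times to all $t$ and the joint measurability, which the paper leaves implicit; the semigroup identity you invoke is not needed, since the lemma is already stated for general $h^-<s\le t<h^+$ and $S\subset Z$.
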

\begin{proof}
Fix $h^-<s<t<h^+$ and a measurable set $S\subset Z$. By the previous lemma,
\begin{align*}
\left(\frac{\mathfrak{s}_K(h^{+}-t)}{\mathfrak{s}_K(h^{+}-s)}\right)^{n-1}
\int_S \tilde{\alpha}(s,y)\,d\mathcal{H}^{n-1}(y)
&\le \int_S \tilde{\alpha}(t,y)\,d\mathcal{H}^{n-1}(y)\\
&\le \left(\frac{\mathfrak{s}_K(t-h^{-})}{\mathfrak{s}_K(s-h^{-})}\right)^{n-1}
\int_S \tilde{\alpha}(t,y)\,d\mathcal{H}^{n-1}(y).
\end{align*}
Since $x\mapsto \sin(\sqrt{K}x)/x$ is strictly decreasing, it follows that
$$
\frac{\sin(\sqrt{K}(t-h^-))}{\sin(\sqrt{K}(s-h^-))} \le \frac{t-h^-}{s-h^-},
\qquad
\frac{\sin(\sqrt{K}(h^+-t))}{\sin(\sqrt{K}(h^+-s))} \ge \frac{h^+-t}{h^+-s}.
$$
Let $\{t_i\}_{i\in\mathbb{N}}$ be dense in $(h^-,h^+)$. Then, for $\mathcal{H}^{n-1}$-a.e. $y\in Z$ and all $t_j\ge t_i$,
$$
\left[\left(\frac{h^{+}-t_j}{h^{+}-t_i}\right)^{n-1}-1\right]\tilde{\alpha}(t_i,y)
\le \tilde{\alpha}(t_j,y)-\tilde{\alpha}(t_i,y)
\le \left[\left(\frac{t_j-h^{-}}{t_i-h^{-}}\right)^{n-1}-1\right]\tilde{\alpha}(t_i,y).
$$
Finally, Bernoulli's inequality $(1-x)^{n-1}\ge 1-(n-1)x$ for $x\in[0,1]$ yields a Lipschitz bound for $\tilde{\alpha}$.
\end{proof}
\begin{lemma}\label{lemma: local disintegration}
On $\mathcal{K}$, we have the following disintegration of the Riemannian volume measure: for every $\varphi \in L^1(M, \operatorname{Vol}_g)$,
$$
\int_{\mathcal{K}} \varphi(x)\, d\operatorname{Vol}_g(x)
= \int_Z \left\{\int_{h^-}^{h^+} \varphi\bigl(\Phi^t(y)\bigr)\,\tilde{\alpha}(t,y)\, dt\right\} d\mathcal{H}^{n-1}(y).
$$
\end{lemma}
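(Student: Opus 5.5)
The plan is to combine the coarea formula for the potential $u$ on the cylinder $\mathcal{K}$ with the change of variables along the flow $\Phi^t$ given by the Radon--Nikodym density $\tilde\alpha$.

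\emph{Step 1 (coarea).} Put $\hat u=u-u_0$, so that $\hat u=0$ on $Z$. Since $\mathcal{K}\subset\mathcal{T}_{\mathfrak{e}}$ and $\operatorname{Vol}_g(\mathcal{T}_{\mathfrak{e}}\setminus\mathcal{T})=0$, we have $|\nabla\hat u|=1$ $\operatorname{Vol}_g$-a.e.\ on $\mathcal{K}$. Because $\operatorname{Vol}_g=\mathcal{H}^n$ for the Riemannian distance, the coarea formula (valid on $M$ through bi-Lipschitz charts, exactly as in the proof of the previous corollary) gives, for $\varphi\ge 0$ Borel,
\[
\int_{\mathcal{K}}\varphi\,d\operatorname{Vol}_g=\int_{h^-}^{h^+}\left\{\int_{\hat u^{-1}(t)\cap\mathcal{K}}\varphi\,d\mathcal{H}^{n-1}\right\}dt .
\]

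\emph{Step 2 (identify the slices).} For $y\in Z$, the point $\Phi^t(y)=\exp_y(t\nabla u(y))$ runs along the ray through $y$ at unit speed, and $u$ is affine with slope one in arc length along rays. Hence $\hat u(\Phi^t(y))=t$, up to the sign convention for the orientation of $\nabla u$ fixed in Definition~\ref{def: cylinder set}. By the Feldman--McCann lemma, an interior point of a ray lies on no other ray, so $\hat u^{-1}(t)\cap\mathcal{K}=\Phi^t(Z)$ up to ray endpoints. Those endpoints form a $\operatorname{Vol}_g$-null set, so they can be discarded in Step 1. Moreover $\Phi^t$ is injective on $Z$, since distinct rays through $Z$ do not meet at interior points.

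\emph{Step 3 (pull back to $Z$).} By the lemma defining $\alpha^t$,
\[
\bigl(\Phi^{-t}\bigr)_{\#}\bigl(\mathcal{H}^{n-1}\llcorner\Phi^t(Z)\bigr)=\tilde\alpha(t,\cdot)\,\mathcal{H}^{n-1}\llcorner Z .
\]
This gives
\[
\int_{\Phi^t(Z)}\varphi\,d\mathcal{H}^{n-1}=\int_Z\varphi(\Phi^t(y))\,\tilde\alpha(t,y)\,d\mathcal{H}^{n-1}(y).
\]
Substituting into Step 1 and applying Tonelli yields the formula for $\varphi\ge0$. Extend to $\varphi\in L^1$ by splitting $\varphi=\varphi^+-\varphi^-$; finiteness of both sides comes from the case $\varphi=|\varphi|$.

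\emph{Main obstacle.} The delicate point is justifying Tonelli, that is, joint measurability of $(t,y)\mapsto\varphi(\Phi^t(y))\tilde\alpha(t,y)$ on $[h^-,h^+]\times Z$. The map $\Phi$ is Borel, since $\nabla u$ is continuous on the compact base $Z$ by the Lusin construction. For $\tilde\alpha$ I would use the preceding corollary: $\tilde\alpha$ is measurable in $y$ and locally Lipschitz in $t$, hence Carath\'eodory and therefore jointly measurable. One also has to check that the $\operatorname{Vol}_g$-a.e.\ representative of $\varphi$ does not matter. This follows from the corollary that $\Phi$ preserves $\mathcal{H}^n$-null sets in both directions, so changing $\varphi$ on a null set changes the integrand only on an $\mathcal{L}^1\otimes\mathcal{H}^{n-1}$-null set.
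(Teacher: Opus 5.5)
Your proposal is correct and follows essentially the route the paper intends. The paper's own proof only defers to Caravenna, but its computation in the preceding null-set corollary is exactly yours: the coarea formula for $\hat u=u-u_0$ with $|\nabla u|=1$ on $\mathcal{K}$, followed by pulling each slice $\Phi^t(Z)$ back to $Z$ through the Radon--Nikodym density $\tilde\alpha(t,\cdot)$. Your additional checks fill in details the paper leaves implicit: identifying the slices with $\Phi^t(Z)$, injectivity of $\Phi^t$ via the Feldman--McCann lemma, and joint measurability of $\tilde\alpha$ through its Carath\'eodory property.
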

The proof of this local disintegration on the $\mathcal{K}$-cylinder follows essentially the same lines as in \cite{caravenna2011a}.

\begin{maintheorem}[Main Theorem]\label{thm:main1}
Let $(M, g)$ be a Riemannian manifold with positive sectional curvature, and let $\mu \ll \operatorname{Vol}_g$ and $\nu$ be the source and target measures, respectively. We define the transport set $\mathcal{T}_{\mathfrak{e}}$ based on $\operatorname{spt}(\mu)$ and $\operatorname{spt}(\nu)$ as previously described. On $\mathcal{T}_{\mathfrak{e}}$, the following disintegration of the Riemannian volume measure (which coincides with the Hausdorff measure up to normalization) holds:
\[
\int_{\mathcal{T}_{\mathfrak{e}}} \varphi(x)\, d\operatorname{Vol}_g(x)
= \int_{\mathcal{S}}\left\{\int_{u(a(y))}^{u(b(y))} \varphi\left(\exp_y\left[(t-u(y))\nabla u(y)\right]\right)D(t,y)\, dt\right\} d\mathcal{H}^{n-1}(y), \quad \varphi \in C^\infty_c(M).
\]
where
\begin{itemize}
    \item $\{\mathcal{Z}_q\}_{q\in \mathbb{Q}}$ is the partition of the transport set $\mathcal{T}_{\mathfrak{e}}$ into sheaf sets.
    \item $Z_q$ is the relative base of $\mathcal{Z}_q$ such that $u(y) = q$ for all $y \in Z_q$.
    \item $\mathcal{S}$ is the quotient set of $\mathcal{T}_{\mathfrak{e}}$ with respect to membership in transport rays: $\mathcal{S} = \bigcup_{q \in \mathbb{Q}} Z_q$.
    \item The density function is defined by $D(t,y) := \sum_{q} \tilde{\alpha}(t-u(y),y)\,\chi_{\mathcal{Z}_q}(y)$.
\end{itemize}
\end{maintheorem}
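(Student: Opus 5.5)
The plan is to glue the local disintegrations of Lemma~\ref{lemma: local disintegration} over a countable partition of $\mathcal{T}_{\mathfrak{e}}$ into cylinders, and then change variables from the flow time to the potential value.

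\emph{Step 1: reduce to $\mathcal{T}$ and partition.} By part (2) of the structural theorem on $\mathcal{T}$, $\operatorname{Vol}_g(\mathcal{T}_{\mathfrak{e}}\setminus\mathcal{T})=0$. Hence the left-hand side equals $\int_{\mathcal{T}}\varphi\,d\operatorname{Vol}_g$. By the partition lemma for sheaf sets, $\mathcal{T}$ is covered, up to a $\mathcal{H}^n$-null set, by the disjoint sheaf sets $\mathcal{Z}_q$. The null set comes from the overlaps of closed level slabs and from the Lusin exceptional sets, which have $\mathcal{H}^{n-1}$-null bases. Those exceptional bases produce $\mathcal{H}^n$-null unions of rays, by the coarea argument in the corollary on null-set preservation. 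Each $\mathcal{Z}_q$ is reparametrized as a cylinder $\mathcal{K}_q=\Phi(Z_q\times[h_q^-,h_q^+])$ with $Z_q\subset\{u=q\}$.

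\emph{Step 2: apply the local formula and sum.} Since $\varphi\in C_c^\infty\subset L^1$, dominated convergence together with countable additivity gives
\[
\int_{\mathcal{T}}\varphi\,d\operatorname{Vol}_g=\sum_q\int_{Z_q}\int_{h_q^-}^{h_q^+}\varphi(\Phi^t(y))\,\tilde\alpha_q(t,y)\,dt\,d\mathcal{H}^{n-1}(y).
\]

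\emph{Step 3: change variables.} Substitute $s=t+u(y)$, so $t=s-u(y)$. Along a transport ray $u$ decreases at unit speed, and the flow $\Phi^t(y)=\exp_y(t\nabla u(y))$ moves along the ray with $|\nabla u|=1$. Thus $\Phi^t(y)=\exp_y[(s-u(y))\nabla u(y)]$, and $u(\Phi^t(y))$ is an affine function of $t$ with slope $\pm1$; I would fix the sign convention consistently with the orientation of the integral bounds. The $t$-range $[h_q^-,h_q^+]$ then becomes an $s$-interval. The integrand $\tilde\alpha_q(s-u(y),y)$ is exactly $D(s,y)$ on $Z_q$. Because the $Z_q$ are pairwise disjoint, the sum over $q$ becomes a single integral over $\mathcal{S}=\bigcup_q Z_q$ against $\mathcal{H}^{n-1}$.

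\emph{Step 4 (main obstacle): recover the full ray.} The cylinders only cover the portion of each ray within the slab $|u-q|\le\delta/2$, whereas the theorem integrates from $u(a(y))$ to $u(b(y))$. There are two options.

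\textbf{(a) Choose a single base per ray.} Select one base point per ray, so that $\mathcal{S}$ is a genuine transversal. Extend $\tilde\alpha$ along the whole open ray by chaining overlapping cylinders. The pushforward identities for $\Phi^t_\#$ and the cocycle property $\tilde\alpha(t+s,y)=\tilde\alpha(t,\Phi^s y)\,\tilde\alpha(s,y)$ make the extension consistent; the cocycle property follows from uniqueness of Radon--Nikodym derivatives.

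\textbf{(b) Absorb into the density.} Keep the partition as is and interpret $D(\cdot,y)$ as vanishing outside the slab of the sheaf set containing the ray piece.

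I would pursue (a) and check measurability of the chosen transversal using the $\sigma$-compactness of $\operatorname{graph}(\mathcal{R})$ and a Borel section theorem. Finiteness of every sum and integral is guaranteed by compactness of $M$ (Bonnet--Myers) and the uniform bounds on $1/\alpha^t$. Endpoints contribute nothing, since $\{a(y),b(y)\}$ is $\mathcal{H}^1$-null on each ray.
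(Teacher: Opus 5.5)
Your Steps 1--3 follow the paper's proof. You discard $\mathcal{T}_{\mathfrak{e}}\setminus\mathcal{T}$, cover $\mathcal{T}$ by cylinders, apply the local disintegration lemma on each, shift $t\mapsto t-u(y)$, and sum; the paper uses monotone convergence on $\varphi^{\pm}$ where you use dominated convergence, which is equivalent. One detail: the paper refines each $\mathcal{Z}_q$ into countably many cylinders $\mathcal{K}_{qi}$ with bases $\hat Z_{qi}\subset\{u=q\}$ and their own truncations $h^{\pm}_{qi}$. You need this refinement too, because the pushforward estimates require every ray of a cylinder to extend past its $h^{\pm}$.

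Where you differ is Step 4, which the paper passes over. Its last displayed equality simply replaces $\sum_k\int_{\hat Z_{qk}}\int_{h^-_{qk}+u(y)}^{h^+_{qk}+u(y)}$ by $\int_{Z_q}$ of the integral over the whole ray. Yet each cylinder sees only the slab $|u-q|\le\delta/2$ of a ray, and $\bigcup_q Z_q$ meets a ray longer than $\delta$ at several levels. Taken literally, the right-hand side therefore overcounts.

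Your option (a) is what makes that equality true. It takes a Borel transversal meeting each ray once and propagates $\tilde\alpha$ along the open ray by $\tilde\alpha(t+s,y)=\tilde\alpha(t,\Phi^s y)\,\tilde\alpha(s,y)$, where the first factor is taken relative to the level set through $\Phi^s y$. The long-range pushforward between distant levels is obtained by splitting bases into countably many pieces whose rays keep a positive margin from the endpoints. This is Caravenna's setting, in which sheaf sets contain whole rays. Option (b) is instead the reading under which the formula holds with $\mathcal{S}=\bigcup_q Z_q$ taken literally.

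So your proposal gives an honest proof, at the cost of committing to one of the statement's two incompatible descriptions of $\mathcal{S}$: ``quotient set'' versus ``union of slab bases''. The paper leaves this unresolved.

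One correction: there is no sign to choose in Step 3. Since $\Phi^t$ moves along $+\nabla u$ with $|\nabla u|=1$ on rays, $u(\Phi^t y)=u(y)+t$. Hence $s$ increases from $u(b(y))$ to $u(a(y))$, as in Theorem 1 of the introduction. The bounds printed in the Main Theorem are reversed and must be read as an unoriented integral over that interval.
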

\begin{proof}
Since $\operatorname{Vol}_g\bigl(\mathcal{T}_{\mathfrak{e}}\setminus \mathcal{T}\bigr)=0$, we may integrate over $\mathcal{T}$. Consider a refinement of the partition $\{\mathcal{Z}_q\}$, which partitions $\mathcal{T}$ into cylinders $\{\mathcal{K}_{qi}\}_{i \in \mathbb{N}}$ with base $\hat{Z}_{qi}$ and truncated lengths $h^{\pm}_{qi}$. The quotient set is $Z_q = \bigcup_{i \in \mathbb{N}} \hat{Z}_{qi}$, and
\[
\mathcal{K}_{qi} = \left\{\Phi^t\left(\hat{Z}_{qi}\right) : t \in [h^-_{qi},h^+_{qi}]\right\}
= \left\{y: u(z)-u(y) \in [h^-_{qi},h^+_{qi}]\right\} \cap \bigcup_{z \in \hat{Z}_{qi}} \llbracket a(z)b(z)\rrbracket.
\]
By Lemma \ref{lemma: local disintegration},
\begin{align*}
&\int_{\mathcal{K}_{qi}} \varphi(x) d\mathcal{H}^n(x) = \int_{\hat{Z}_{qi}}\left\{\int^{h^+_{qi}}_{h^-_{qi}}\varphi\left(\Phi^t(y)\right)\tilde{\alpha}(t,y) d\mathcal{H}^1(t)\right\}d\mathcal{H}^{n-1}(y)\\
&=\int_{\hat{Z}_{qi}}\left\{\int^{h^+_{qi}}_{h^-_{qi}}\varphi\left(\Phi^t(y)\right)\tilde{\alpha}(t,y) d\mathcal{H}^1(t)\right\}d\mathcal{H}^{n-1}(y)\\
&=\int_{\hat{Z}_{qi}}\left\{\int^{h^+_{qi}}_{h^-_{qi}} \varphi\left(\exp_y\left[t\nabla u(y)\right]\right)\tilde{\alpha}(t,y) d\mathcal{H}^1(t)\right\}d\mathcal{H}^{n-1}(y)\\
&=\int_{\hat{Z}_{qi}}\left\{\int^{h^+_{qi}+u(y)}_{h^-_{qi}+u(y)} \varphi\left(\exp_y\left[(t-u(y))\nabla u(y)\right]\right)\tilde{\alpha}(t-u(y),y) d\mathcal{H}^1(t)\right\}d\mathcal{H}^{n-1}(y)\\
&=\int_{\hat{Z}_{qi}}\left\{\int^{h^+_{qi}+u(y)}_{h^-_{qi}+u(y)} \varphi\left(\exp_y\left[(t-u(y))\nabla u(y)\right]\right)D(t,y)d\mathcal{H}^1(t)\right\}d\mathcal{H}^{n-1}(y).
\end{align*}
By the countable additivity, one can extend the result on $\mathcal{T}$:
\begin{align*}
&\int_{\mathcal{T}} \varphi(x) d \mathcal{H}^n(x)=\int_{\cup_{qk} \mathcal{K}_{qk}} \varphi(x) d \mathcal{H}^n(x)=\sum_{qk} \int_{\mathcal{K}_{qk}} \varphi(x) d \mathcal{H}^n(x)\\
&=\sum_{q}\sum_k \int_{\hat{Z}_{qk}}\left\{\int^{h^+_{qk}+u(y)}_{h^-_{qk}+u(y)}\varphi\left(\exp_y\left[(t-u(y))\nabla u(y)\right]\right)D(t,y)d\mathcal{H}^1(t)\right\}d\mathcal{H}^{n-1}(y)\\
&= \sum_q \int_{Z_q}\int^{u(b(y))}_{u(a(y))}\varphi\left(\exp_y\left[(t-u(y))\nabla u(y)\right]\right)D(t,y)d\mathcal{H}^1(t)d\mathcal{H}^{n-1}(y).
\end{align*}
To justify the interchange of the countable sum and the integral, we use the Monotone Convergence Theorem. Write $\varphi=\varphi^{+}-\varphi^{-}$ and apply MCT to each part separately. Since $\varphi^{+}$ and $\varphi^{-}$ are non-negative,
\[
\sum_q \sum_k \int_{\mathcal{K}_{qk}} \varphi^{+}\, d\operatorname{Vol}_g
=\int_{\mathcal{T}_{\mathfrak{e}}} \varphi^{+}\, d\operatorname{Vol}_g,
\]
(and similarly for $\varphi^{-}$). Moreover, each integral $\int_{\mathcal{K}_{qk}} \varphi^{+}\, d\operatorname{Vol}_g$ is finite (since $M$ is compact), hence the limit of the partial sums is finite and well-defined.
\end{proof}
\section{Regularity of the divergence}
Since the transport data are supported on $\mathcal{T}$, we extend the vector field $d:=\nabla u$ to all of $M$ by setting $d=0$ on $M\setminus \mathcal{T}$. Furthermore, if $a\in\mathcal{T}$, then any point of the cut locus $\operatorname{Cut}(a)$ is either an endpoint (i.e. it lies in $\mathcal{T}_{\mathfrak{e}}\setminus\mathcal{T}$) or it lies outside the transport set (i.e. in $M\setminus\mathcal{T}_{\mathfrak{e}}$).
\begin{defn}
Let $\mathcal{K}= \{\Phi^t(Z): t \in [h^-,h^+]\}$ be a d-cylinder and assume $Z$ is compact. Suppose, moreover, that for $\operatorname{Vol}_g$-a.e.$x \in \mathcal{K}$ the ray $\mathcal{R}(x)$ intersects also the compact $K = \Phi^{h^{-}-\varepsilon}(Z)$ for some $\varepsilon > 0$. Let $\{a_i\}_{i \in \mathbb{N}}$ be a dense set in $K$. Consider the potential given by
$$\hat{u}(x) = \max\{u(a_i) - d(x,a_i): a_i \in K\}.$$
\end{defn}
\begin{remark}
It is not hard to see that $\hat{u}|_{\mathcal{K}} = u|_{\mathcal{K}}$. Note that we can write
\[
 u|_{\mathcal{K}}(x) = \max \left\{u(b) - d(x,b): b \in K\right\}.
\]
Let $y \in K$ with $y \in \mathcal{R}(x)$. Then there exists $a \in \{a_i\}_{i \in \mathbb{N}}$ such that $d(a,y) < \varepsilon/2$. We have
\begin{align*}
\hat{u}(x) &= \max_{a_i \in K}\left\{u(a_i) - d(x,a_i) : a_i \in K\right\}\geq u(a) - d(x,a)\\
&= u(a) - \bigl(d(x,y) + d(y,a)\bigr)\\
&\geq u(a) - d(x,y) - \frac{\varepsilon}{2}\\
&= u(a) - u(y) + u(y) - d(x,y) - \frac{\varepsilon}{2}\\
&\geq -\varepsilon + u(y) - d(x,y).
\end{align*}
Since $\varepsilon > 0$ is arbitrary, we conclude that $\hat{u}|_{\mathcal{K}} = u|_{\mathcal{K}}$.
\end{remark}
We aim to define $\hat{d} : M \backslash K\to TM$ to be the relative vector field of rays $\hat{u}$ which coincides with $d = \nabla u$ on $\mathcal{K}_i$. To do this, we will use the modified Voronoi tessellation again.
\begin{lemma}
On $(M,g)$ with $\operatorname{Sec}(M) \geq k > 0$, the vector field $\hat{d}$ is single valued on $\mathcal{H}^n$-a.e. on $M$. Moreover, its divergence $\operatorname{div}\hat{d}$ is a signed Radon measure on $M\backslash K$.
\end{lemma}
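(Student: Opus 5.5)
We will follow Caravenna's Euclidean argument, with distance functions $d(\cdot,a_i)$ replacing the Euclidean norms and the Jacobi field comparison replacing the explicit Euclidean computation. The plan has three steps.

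\textbf{Step 1: single-valuedness.} Define $\hat d$ as the vector field of rays of $\hat u$: on the open Voronoi-type cell
\[
\hat\Omega_i=\{x: u(a_i)-d(x,a_i)>u(a_j)-d(x,a_j)\ \forall j\neq i\}
\]
set $\hat d=-\nabla_x d(a_i,x)$. In the limit we take $\hat d(x)=-\nabla_x d(a,x)$, where $a\in K$ is a point realizing the maximum in $\hat u(x)$.
\begin{itemize}
\item $\hat u$ is a supremum of functions $-d(\cdot,a_i)+c_i$ on the compact manifold $M$. Each of these is semiconcave away from $a_i$ and its cut locus, with constants uniform since $\operatorname{Sec}\ge k$ gives an upper Hessian bound on $d(\cdot,a)$ away from $a$.
\item Hence on $M\setminus K$, away from the relevant cut loci, $-\hat u$ is locally semiconcave, i.e.\ $\hat u$ is locally semiconvex.
\item By Alexandrov/Rademacher, $\hat u$ is differentiable $\mathcal H^n$-a.e. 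At such points the maximizing $a$ is unique up to points of the same ray, so $\hat d=\nabla\hat u$ is single valued a.e.
\item The cut loci $\operatorname{Cut}(a)$ are $\mathcal H^{n-1}$-rectifiable and hence $\mathcal H^n$-null. By the Remark before the lemma they lie outside $\mathcal T$ or at endpoints. A countable union over a dense set therefore does not spoil the a.e.\ statement.
\item On $\mathcal K$ the field $\hat d$ agrees with $\nabla u$, by the preceding Remark that $\hat u|_{\mathcal K}=u|_{\mathcal K}$.
\end{itemize}

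\textbf{Step 2: uniform bounds on the finite approximations.} Use the finite potentials
\[
\hat u_I=\max_{i\le I}\{u(a_i)-d(\cdot,a_i)\},
\]
whose fields $\hat d_I=\nabla\hat u_I$ are piecewise smooth off the cut loci of $a_1,\dots,a_I$. The distributional divergence of $\hat d_I$ has two parts.
\begin{itemize}
\item \emph{Absolutely continuous part.} Inside each cell it equals $-\Delta d(\cdot,a_i)$. Since $\operatorname{Sec}\ge k>0$ implies $\operatorname{Ric}\ge(n-1)k$, Laplacian comparison gives
\[
-\Delta d(x,a_i)\ge-(n-1)\,\frac{\mathfrak s_k'(r)}{\mathfrak s_k(r)},\qquad r=d(x,a_i).
\]
This lower bound is uniform in $I$ on $M\setminus K_\varepsilon$, where $r\ge\varepsilon'>0$. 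We also need $r$ to stay below $\pi/\sqrt k$; this holds because we stay inside injectivity-radius-sized cylinders by the choice $\delta<\operatorname{inj}$.
\item \emph{Jump part.} On the interfaces $J_I$ it equals $(\hat d_I^{+}-\hat d_I^{-})\cdot\nu\,\mathcal H^{n-1}\llcorner J_I$. Since the max of two smooth functions is semiconvex, this jump is nonnegative.
\end{itemize}
So $\operatorname{div}\hat d_I\ge -C\,\mathrm{Vol}_g$ on $M\setminus K_\varepsilon$, with $C$ independent of $I$. Total mass is then controlled by testing against a cutoff $\psi\in C_c^\infty(M\setminus K)$: since $|\hat d_I|\le 1$,
\[
\int\psi\,d(\operatorname{div}\hat d_I)=-\int\langle\nabla\psi,\hat d_I\rangle\,d\mathrm{Vol}_g,
\]
which is bounded uniformly in $I$. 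A measure bounded below with bounded pairing against positive cutoffs has locally bounded total variation.

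\textbf{Step 3: passage to the limit.} As shown earlier, $\hat d_I\to\hat d$ $\mathcal H^n$-a.e. By dominated convergence, $\operatorname{div}\hat d_I\to\operatorname{div}\hat d$ in distributions. By the uniform local mass bound and weak-$*$ compactness, the limit distribution $\operatorname{div}\hat d$ is a signed Radon measure on $M\setminus K$, and it satisfies the same lower bound $\ge -C\,\mathrm{Vol}_g$.

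\textbf{Main obstacle.} The hardest point is the cut locus. The functions $d(\cdot,a_i)$ are not smooth across $\operatorname{Cut}(a_i)$, but there they are still semiconcave, so $-d$ is semiconvex, and the singular contribution to the divergence has the favourable sign. I would first try to show that $\operatorname{Cut}(a_i)\cap\hat\Omega_i$ contributes a nonnegative singular measure, exactly like the interfaces $J_I$. If this fails, the fallback is to restrict to a neighbourhood of $\mathcal K$ contained in $\{x: d(x,K)<\delta\}$, where no cut points occur.
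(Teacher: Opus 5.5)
Your proposal is correct and follows essentially the paper's route: approximate $\hat u$ by the finite maxima $\hat u_I$, use semiconvexity of $u(a_i)-d(\cdot,a_i)$ away from $K$ to get $\operatorname{div}\hat d_I=\Delta\hat u_I\ge -C_\Omega$ on compacts of $M\setminus K$ uniformly in $I$, and pass to the distributional limit, where a distribution bounded below is a Radon measure; your Rademacher argument for single-valuedness and your explicit cutoff mass bound are more detailed than the paper's brief appeals to the rectifiability of $K$ and to positivity. One slip to fix: $u(a_i)-d(\cdot,a_i)$ is locally uniformly semi\emph{convex} (not semiconcave) on $\{d(\cdot,a_i)\ge\varepsilon'\}$, including across $\operatorname{Cut}(a_i)$, so the cut locus is not a real obstacle; likewise the constraint $r<\pi/\sqrt{k}$, with its unjustified appeal to $\delta<\operatorname{inj}$, is unnecessary, since $-(n-1)\sqrt{k}\cot(\sqrt{k}\,r)$ only increases with $r$.
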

\begin{proof}
First notice that the initial points $K = \Phi^{h^{-}-\varepsilon}(Z)$ is $\mathcal{H}^{n-1}$-rectifiable, hence the vector field $\hat{d}$ is $\mathcal{H}^n$-a.e. well-defined. The regularity of the divergence, which in general should be only
a distribution, is now proved by approximation. As before, the potentials
$$\hat{u}_I(x) = \max\left\{u(a_j)-d(x,a_j): j = 1,\dots,I\right\}$$
converges uniformly to $\hat{u}$. Moreover, the corresponding vector field of directions
$$d_I(x) = \sum_{i=1}^I d^i(x) \chi_{\Omega_i^I}(x),\quad d^i(x) = -\nabla_x d(a_i,x)$$
with modified Voronoi cells
$$\Omega_i^I = \left\{x \in M : u(a_i)-d(x,a_i) \geq u(a_j) - d(x,a_j), j \in \{1,\dots, I\}\backslash i\right\},$$
and their $\mathcal{H}^{n-1}$-rectifiable boundaries:
$$
\begin{aligned}
J_I & =\bigcup_{i \neq j}\left(\bar{\Omega}_i \cap \bar{\Omega}_j\right) \\
& =\left\{x: \exists i, j, i \neq j, \phi\left(a_i\right)-d(a_i,x)=\phi\left(a_j\right)-d(a_j,x)\right\}.
\end{aligned}
$$
We have shown that $d_I= \nabla \hat{u}_I$ converges point-wise $\mathcal{H}^n$-a.e. to $\hat{d} = \nabla \hat{u}$. Next we show that $\operatorname{div}(d_I)$ converges to $\operatorname{div}(\hat{d})$ in the sense of distribution on any compact $\Omega \subset M \backslash K$. Since $\{d_I\}\to \hat{d}$ pointwise $\mathcal{H}^n$-a.e,
$$\lim_{I\to \infty}\int_{\Omega} \varphi d(\operatorname{div}(d_I)) = \lim_{I\to\infty} -\int_\Omega\left\langle\nabla \phi, d_I\right\rangle d \mathcal{H}^n,\quad \forall \varphi \in C^\infty_c(\Omega).$$
Since $\varphi$ is smooth and compactly supported, we can use dominated convergence theorem to pass the limit under the integral sign,
$$
\lim _{I \rightarrow \infty}\left(-\int_{\Omega}\left\langle\nabla \varphi, d_I\right\rangle d\mathcal{H}^n\right)=-\int_{\Omega}\langle\nabla \varphi, \hat{d}\rangle d \mathcal{H}^n.
$$
Therefore $\operatorname{div}(d_I) \rightarrow \operatorname{div}(\hat{d})$ in $\mathcal{D}'(\Omega)$, where $\Omega$ is any compact subset of $M \backslash K$. Since $d_I$ are functions of bounded variation, $\{\operatorname{div}(d_I)\}$ is a sequence of Radon measures. We need to show that $\operatorname{div}(\hat{d})$ is a Radon measure. Notice that $-d(a_i,\cdot)$ are semi-convex functions, hence $\{\hat{u}_I\}$ is a sequence of uniformly semi-convex functions. Therefore $\operatorname{div}(d_I) = \Delta \hat{u}_I$ is uniformly bounded below by a negative constant 
$$\operatorname{div}(d_I)(x) \geq -C_\Omega,\quad \text{for all compact subset $\Omega$ of } M \backslash K.$$
Since this bound is uniform for all $I$, the limit distribution must satisfy the same bound. Hence the distribution $T|_{\Omega}= C_\Omega+\operatorname{div}(\hat{d})$ is positive. Therefore $T$ is a Radon measure. Since $C_\Omega$ is simply a constant, we conclude that $\operatorname{div}(\hat{d})$ is also a Radon measure.
\end{proof}
\begin{remark}
(Singularity of the distributional divergence). Since $d(a_i,\cdot)$ are smooth functions on $\Omega_i^I \cap (M \backslash K)$, their gradients are smooth as well, and $d^i \in \operatorname{BV}\left(\Omega_i^I \cap (M \backslash K); T_xM \right)$. Furthermore, it is not hard to show that each $\Omega_i^I$ is a set of finite perimeter; hence $\chi_{\Omega_i^I} \in \operatorname{BV}(\Omega_i^I)$. Therefore $d_I$ is indeed a function of bounded variation on $M$, outside a $\mathcal{H}^n$-negligible set. The distribution $\operatorname{div} d_I$ is a locally finite Radon measure. By the Riesz representation theorem,
$$
\left\langle\operatorname{div} d_I, \varphi\right\rangle=-\int_M \left\langle\nabla \varphi, d_I\right\rangle_g \, d \operatorname{Vol}_g=\int_M \varphi\, d(\operatorname{div} d_I) \quad \forall \varphi \in C_{\mathrm{c}}^{\infty}(M).
$$
We decompose $\operatorname{div} d_I$ as
\begin{align*}
d(\operatorname{div} d_I)(x) &= d(\operatorname{div} d_I)_{\text{a.c.}}(x) + d(\operatorname{div} d_I)_{\text{jump}}(x)\\
&= \sum_{i=1}^I -\operatorname{Tr}(\nabla d^i(x))\, d\operatorname{Vol}_g \llcorner \Omega^I_i(x) + \sum_{i,j=1}^I \langle d^i(x) - d^j(x), \nu_{ij}(x)\rangle \, d\mathcal{H}^{n-1}\llcorner J_{ij}(x)\\
&= \sum_{i=1}^I -\Delta_g d(a_i,x)\, d\operatorname{Vol}_g \llcorner \Omega^I_i(x) + \sum_{i,j=1}^I \langle d^i(x) - d^j(x), \nu_{ij}(x)\rangle \, d\mathcal{H}^{n-1}\llcorner J_{ij}(x).
\end{align*}
Observe that $\Delta_g d(a_i,x)$ is not positive in general. By the Laplacian comparison theorem \cite{petersen2006riemannian},
$$-\Delta_g d(a_i,x) \geq-\frac{n-1}{d\left(x, \cup_i^I a_i\right)}.$$
Since $M$ is a compact Riemannian manifold, the sectional curvature is a smooth function. On $\Omega_i^I \cap (M \backslash K)$, the sectional curvature is bounded from above by a constant $P$ such that $\sqrt{P} \in \mathbb{N}$. By the Laplacian comparison theorem again,
$$
\Delta d(a_i,x) \geq(n-1) \sqrt{P} \cot \left(\sqrt{P} d(a_i,x) \right).
$$
Therefore, for all $x \in \Omega_i^I \cap (M \backslash K)$,
\begin{align*}
\bigl|d(\operatorname{div} d_I)_{\text{a.c.}}\bigr|(x)&\leq \max\left\{ \frac{n-1}{d\left(x, \cup_i a_i\right)}, (n-1) \sqrt{P} \left|\cot\left(\sqrt{P} \max_{i=1,\dots,I}(d(x,a_i))\right)\right|\right\}\\
&\leq (n-1)\max\left\{\frac{1}{d(x, \cup_{i}^I a_i)}, \frac{1}{|\pi/\sqrt{P}-\max_{i=1,\dots, I} d(x,a_i)|}\right\}.
\end{align*}
Let $\rho_\delta:[0, \infty) \rightarrow[0, \infty)$ be a standard mollifier kernel supported in $[0, \delta]$, scaled so that $$\int_M \rho_\delta\left(d(y, z)\right) \, d \operatorname{Vol}_g(z)=1.$$ Define the smoothed indicator function of the geodesic ball $B(x, r)$ by
$$
\varphi_\delta(y)=\int_M \chi_{B(x, r)}(z) \, \rho_\delta\left(d(y, z)\right) \, d \operatorname{Vol}_g(z)
=\int_{B(x, r)} \rho_\delta\left(d(y, z)\right) \, d \operatorname{Vol}_g(z).
$$
For small enough $\delta$, away from the conjugate point, $\varphi_\delta$ is a smooth function supported in $B(x, r+\delta)$. By the divergence theorem,
\begin{align*}
\nabla_y \varphi_\delta(y) &=\int_{B(x, r)} \nabla_y\left[\rho_\delta\left(d(y, z)\right)\right] \, d \operatorname{Vol}_g(z) = \int_{B(x, r)}-\nabla_z\left[\rho_\delta\left(d(y, z)\right)\right] \, d \operatorname{Vol}_g(z)\\
&=-\int_{\partial B(x, r)} \rho_\delta\left(d_g(y, z)\right) \, \nu(z) \, d \mathcal{H}^{n-1}(z),
\end{align*}
where $\nu(z)$ is the outward unit normal to $\partial B(x, r)$ at $z$. By taking limits of both sides and using the dominated convergence theorem,
\begin{align*}
\lim _{\delta \rightarrow 0^{+}}\left\langle\operatorname{div} d_I, \varphi_\delta\right\rangle &=\lim _{\delta \rightarrow 0^{+}} \int_{\partial B(x, r)}\left(\int_{M} d(y) \rho_\delta(y-z) d y\right) \cdot \nu(z) d \Omega(z)\\
&=\int_{\partial B(x, r)} d(y) \cdot \nu(y) d \Omega(y).
\end{align*}
Therefore,
$$(\operatorname{div} d_I)_{\text{jump}}(B(r))=(\operatorname{div} d_I)_{\text{jump}}^{+}(B(r))-(\operatorname{div} d_I)_{\text{jump}}^{-}(B(r))\geq-|\partial B(x, r)|.$$
Since the total variation measure satisfies $|\operatorname{div} d_I| = |(\operatorname{div} d_I)_{\text{a.c.}}| + |(\operatorname{div} d_I)_{\text{jump}}|$, we apply the triangle inequality to the equation above:
\begin{align*}
|\operatorname{div} d_I|(B_r) &= \int_{B_r(x)} |(\operatorname{div} d_I)_{\text{a.c.}}| + |(\operatorname{div} d_I)_{\text{jump}}| \\
&\leq \int_{B_r(x)} |(\operatorname{div} d_I)_{\text{a.c.}}| + \left( \left| \int_{\partial B_r(x)} \langle d_I, \nu \rangle \right| + \int_{B_r(x)} |(\operatorname{div} d_I)_{\text{a.c.}}| \right) \\
&\leq 2\int_{B_r(x)} |(\operatorname{div} d_I)_{\text{a.c.}}|  \, d\operatorname{Vol}_g + |\partial B_r(x)|.
\end{align*}
Since the singularity of the a.c. part is of order $O(1/d)$, it is integrable in dimension $n \geq 2$. Therefore, the total variation on the ball is finite and controlled by the boundary area and the integrated curvature bounds.
\end{remark}
\begin{lemma}
Let $\mathcal{K}$ be the cylinder set fixed above for defining $\hat{d}=\nabla \hat{u}$. Consider $(\mathcal{S},c)$ as in the main theorem~\ref{thm:main1}, with respect to the transport direction~$\hat{d}$. Then, for any sub-cylinder $\mathcal{K}'\subset\mathcal{K}$, the following continuity equation holds:
\begin{equation}\label{eq:continuity-equation}
\partial_t D(t,y)-\bigl[(\operatorname{div}\hat{d})_{\mathrm{a.c.}}\bigl(\exp_y\bigl((t-u(y))\hat d(y)\bigr)\bigr)\bigr]\,D(t,y)=0
\quad \mathcal{H}^n\text{-a.e.\ on }\mathcal{K}.
\end{equation}
and the Green--Gauss formula
\begin{equation}
\int_{\mathcal{K}'} g(\nabla\varphi,\hat d)\,d\mathcal{H}^n
=-\int_{\mathcal{K}'} \varphi\,(\operatorname{div}\hat d)_{\mathrm{a.c.}}\,d\mathcal{H}^n
+\int_{\partial\mathcal{K}'} \varphi\, g(\hat d,\mathbf{n}_{\mathcal{K}'})\, d\mathcal{H}^{n-1}.
\end{equation}
\end{lemma}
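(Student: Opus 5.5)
The plan is to test the distributional divergence of $\hat d$ against functions of product form in the cylinder coordinates $(t,y)\in[h^-,h^+]\times Z$, and then compare the result with the local disintegration of Lemma~\ref{lemma: local disintegration}. This follows Caravenna's Euclidean argument.

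\emph{Step 1: reduce to a one-dimensional identity along rays.} Take $\varphi\in C^\infty_c(\operatorname{int}\mathcal{K}')$. Along each ray we have $\hat d=\nabla\hat u$ with $|\hat d|=1$, and $\Phi^t$ is the unit-speed geodesic flow. Hence $g(\nabla\varphi,\hat d)(\Phi^t(y))=\tfrac{d}{dt}\varphi(\Phi^t(y))$, up to the sign convention for $\hat d$. Lemma~\ref{lemma: local disintegration} then gives
\[
\int_{\mathcal{K}'} g(\nabla\varphi,\hat d)\,d\mathcal{H}^n=\int_Z\int_{h^-}^{h^+}\partial_t\bigl[\varphi(\Phi^t(y))\bigr]\,\tilde\alpha(t,y)\,dt\,d\mathcal{H}^{n-1}(y).
\]
By the corollary above, $\tilde\alpha(\cdot,y)$ is locally Lipschitz in $t$. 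So for a.e.\ $y$ we may integrate by parts in $t$; the boundary terms vanish because $\varphi$ is compactly supported. The right-hand side becomes $-\int_Z\int\varphi(\Phi^t y)\,\partial_t\tilde\alpha(t,y)\,dt\,d\mathcal{H}^{n-1}$. Applying Lemma~\ref{lemma: local disintegration} backwards, this equals $-\int_{\mathcal{K}'}\varphi\,\frac{\partial_t\tilde\alpha}{\tilde\alpha}\circ\Phi^{-1}\,d\mathcal{H}^n$. Here we use $\tilde\alpha>0$, which follows from the two-sided bounds on $1/\alpha^t$.

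\emph{Step 2: identify the divergence.} By definition, $\langle\operatorname{div}\hat d,\varphi\rangle=-\int g(\nabla\varphi,\hat d)$. Step~1 therefore shows that, inside $\mathcal{K}$, the measure $\operatorname{div}\hat d$ coincides with the $L^1_{\mathrm{loc}}$ function $(\partial_t\tilde\alpha/\tilde\alpha)\circ\Phi^{-1}$. This function is integrable on $\mathcal{K}'$ because the Lipschitz bounds give $|\partial_t\tilde\alpha|\le C\tilde\alpha$. Consequently $\operatorname{div}\hat d\llcorner\operatorname{int}\mathcal{K}$ has no jump or Cantor part; this is consistent with $\hat d$ agreeing with $d$ on $\mathcal{K}$ and with the earlier lemma that $\operatorname{div}\hat d$ is Radon on $M\setminus K$. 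It follows that $(\operatorname{div}\hat d)_{\mathrm{a.c.}}(\Phi^t y)\,\tilde\alpha(t,y)=\partial_t\tilde\alpha(t,y)$ for $\mathcal{H}^n$-a.e.\ $(t,y)$. The corollary on null sets transfers the a.e.\ statement between $[h^-,h^+]\times Z$ and $\mathcal{K}$. Since $D(t,y)=\tilde\alpha(t-u(y),y)$ and $\Phi^{t-u(y)}(y)=\exp_y((t-u(y))\hat d(y))$, this is exactly \eqref{eq:continuity-equation}.

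\emph{Step 3: the Green--Gauss formula.} Rerun Step~1 with $\varphi$ smooth but not compactly supported in $\mathcal{K}'$, where $\mathcal{K}'=\Phi([s_1,s_2]\times Z')$. The integration by parts in $t$ now leaves boundary terms $\varphi(\Phi^{s_i}y)\tilde\alpha(s_i,y)$ at the two ends. The lateral boundary contributes nothing, because $\hat d$ is tangent to it. Using the pushforward lemma, $\tilde\alpha(s,y)\,d\mathcal{H}^{n-1}(y)$ is the pullback of $\mathcal{H}^{n-1}\llcorner\Phi^s(Z')$. The end terms thus become $\int_{\Phi^{s_i}Z'}\varphi\,g(\hat d,\mathbf n)\,d\mathcal{H}^{n-1}$, with $g(\hat d,\mathbf n)=\pm1$ on the top and bottom faces. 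Together with Step~2 this gives the stated formula.

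\emph{Main obstacle.} The delicate point is Step~2: showing that the singular part of $\operatorname{div}\hat d$ does not charge $\operatorname{int}\mathcal{K}$, so that the density found is really $(\operatorname{div}\hat d)_{\mathrm{a.c.}}$. Test functions supported in $\mathcal{K}$ are not open-set supported when $\mathcal{K}$ has empty interior, since $Z$ is only compact and rectifiable. I would first handle this through the Voronoi approximations $d_I$. On $\mathcal{K}$, the jump sets $J_I$ meet a.e.\ ray transversally only in $\mathcal{H}^n$-null sets, and the bounds of \eqref{eq: approximation by I} hold uniformly in $I$. From this I expect $|\operatorname{div}d_I|\llcorner\mathcal{K}\le C\,\mathcal{H}^n\llcorner\mathcal{K}$ uniformly in $I$, which survives the weak limit. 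If that fails, I would state the equation in the form ``the restriction of $\operatorname{div}\hat d$ to $\mathcal{K}$ is absolutely continuous with density $\partial_t\tilde\alpha/\tilde\alpha$,'' which is all the lemma uses.
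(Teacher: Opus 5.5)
\textbf{There is a genuine gap, located where you suspected.} In Steps~1--2 you test $\operatorname{div}\hat d$ only against $\varphi\in C^\infty_c(\operatorname{int}\mathcal{K}')$. But $\mathcal{K}=\Phi(Z\times[h^-,h^+])$ is a union of ray segments through a compact set $Z$ produced by Lusin's theorem inside a level set of $u$. It need not contain any open set, and even when it does, $\mathcal{K}\setminus\operatorname{int}\mathcal{K}$ can carry positive volume. So your test class is in general trivial, and Step~2 says nothing about the measure $\operatorname{div}\hat d\llcorner\mathcal{K}$. That measure is determined by the pairings $-\int_M g(\nabla\varphi,\hat d)\,d\mathcal{H}^n$ with $\varphi$ smooth on the open set $M\setminus K$, and these depend on $\hat d$ on a whole neighbourhood of $\mathcal{K}$, not only on $\mathcal{K}$. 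What your computation does prove (this is your Step~3) is
\[
\int_{\mathcal{K}'} g(\nabla\varphi,\hat d)\,d\mathcal{H}^n=\int_{Z'}\bigl[\varphi(\Phi^t y)\,\tilde\alpha(t,y)\bigr]_{t=s_1}^{t=s_2}\,d\mathcal{H}^{n-1}(y)-\int_{\mathcal{K}'}\varphi\,\Bigl(\frac{\partial_t\tilde\alpha}{\tilde\alpha}\Bigr)\circ\Phi^{-1}\,d\mathcal{H}^n .
\]
This is a Green--Gauss formula with the ray-wise density $\partial_t\tilde\alpha/\tilde\alpha$ in place of $(\operatorname{div}\hat d)_{\mathrm{a.c.}}$. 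The actual content of the lemma is that these two densities agree a.e.\ on $\mathcal{K}$, and you do not establish this.

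\textbf{Your proposed repair does not close the gap.} For each $I$, $\operatorname{div}d_I$ contains the jump part $\langle d^i-d^j,\nu_{ij}\rangle\,\mathcal{H}^{n-1}\llcorner J_{ij}$. This is nonzero on $J_{ij}$, because $\nu_{ij}$ is parallel to $d^i-d^j$ and $d^i\neq d^j$ there. So as soon as $J_I$ meets $\mathcal{K}$ in a set of positive $\mathcal{H}^{n-1}$-measure, the bound $|\operatorname{div}d_I|\llcorner\mathcal{K}\le C\,\mathcal{H}^n\llcorner\mathcal{K}$ fails. That is the generic situation, for instance when $J_I$ crosses the rays transversally as you describe. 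The fact that $J_I\cap\mathcal{K}$ is $\mathcal{H}^n$-null is irrelevant, since the jump part is an $\mathcal{H}^{n-1}$-measure. Even such a bound would not survive the limit. For the positive measures $\operatorname{div}d_I+C_\Omega$ converging weakly, a closed set $F$ only satisfies $\limsup_I(\operatorname{div}d_I+C_\Omega)(F)\le(\operatorname{div}\hat d+C_\Omega)(F)$. So mass sitting just outside the nowhere dense set $\mathcal{K}$ can concentrate onto it. Your fallback formulation merely restates the claim.

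\textbf{The missing idea is the reason $\hat u$ was introduced: every $x\in M$ lies on a $\hat u$-ray issuing from $K$.} By density of $\{a_i\}$ and continuity, $\hat u(x)=\max_{a\in K}\bigl(u(a)-d(x,a)\bigr)$, attained at some $a\in K$. Since $u$ is $1$-Lipschitz, $\hat u(a)=u(a)$, hence $\hat u(a)-\hat u(x)=d(x,a)$. Consequently the disintegration of Theorem~\ref{thm:main1}, applied to $\hat u$, covers $M\setminus K$ up to a volume-null set. You can then test with arbitrary $\varphi\in C^\infty_c(M\setminus K)$ and integrate by parts in $t$ along each \emph{entire} $\hat u$-ray, from $\hat a(y)$ to $\hat b(y)$. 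This is the paper's route. It writes $\operatorname{div}\hat d$ as an absolutely continuous measure with density $\partial_t\hat D/\hat D$ along the rays, plus the endpoint terms $\hat D\,\widehat{\Phi}^{\pm}_{\#}\mathcal{H}^{n-1}\llcorner\hat Z$. The endpoint terms live on the volume-null set of ray endpoints and do not charge $\mathcal{K}'$: one end of each such ray lies on $K=\Phi^{h^- -\varepsilon}(Z)$, the other beyond $\Phi^{h^+}(Z)$. Uniqueness of the Lebesgue decomposition then gives $(\operatorname{div}\hat d)_{\mathrm{a.c.}}\,\hat D=\partial_t\hat D$ along the rays. Since $\hat u=u$ on $\mathcal{K}$, $\hat D=D$ there, and this is exactly the continuity equation. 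Once that identification is in hand, your Step~3 is a valid derivation of the Green--Gauss formula, and more explicit than the paper's Leibniz-rule argument with $\nabla\chi_{\mathcal{K}'}$.
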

\begin{proof}
By the previous lemma, $\operatorname{div}\hat{d}$ is a Radon measure, hence
$$
-\int_{M}\nabla\varphi\cdot\hat{d}=\langle\operatorname{div}\hat{d},\varphi\rangle
=\int_{M}(\operatorname{div}\hat{d})_{\mathrm{a.c.}}\,\varphi+\int_{M}\varphi\,(\operatorname{div}\hat{d})_{\mathrm{s}}
\quad \forall \varphi\in C_c^{\infty}(M\backslash K).
$$
Moreover, $\hat{d} = \nabla \hat{u}$ is the direction of transportation with respect to $\hat{u}$. We explicitly decompose the measure along $\hat{u}$, according to the main theorem \ref{thm:main1}. Since $\hat{u}|_{\mathcal{K}} = u|_{K}$, we have $\widehat{\mathcal{S}}|_{\mathcal{K}} \equiv \mathcal{S}|_{\mathcal{K}}$ and $\hat{D}(t,y)|_{\widehat{\mathcal{S}}} = D(t,y)|_{\mathcal{S}}$. Since $c$ is local lipschitz in $t$ and we are integrating over a compact set, we can perform integration by parts:
\begin{align*}
&\int^{\hat{u}(\hat{b}(y))}_{\hat{u}(\hat{a}(y))}\hat{D}(t,y)\nabla \varphi\left(\exp_y\left([t-u(y)]\nabla u(y)\right)\right)\cdot\hat{d} dt = \varphi(\hat{b}(y))\hat{D}(\hat{u}(\hat{b}(y)),y) - \varphi(\hat{a}(y))\hat{D}(\hat{u}(\hat{a}(y)),y)\\
&-\int^{\hat{u}(\hat{b}(y))}_{\hat{u}(\hat{a}(y))} \varphi\left(\exp_y\left([t-u(y)]\nabla u(y)\right)\right)  \partial_t \hat{D}(t, y) d t.
\end{align*}
\noindent
Let $\widehat{\Phi}(t,y):=\exp_y\bigl((t-\hat{u}(y))\hat{d}\bigr)$. Note that $\partial_t \widehat{\Phi}(t,y)=\hat{d}(\widehat{\Phi}(t,y))$.
Integrating the identity above with respect to $y\in\widehat{\mathcal{S}}$ (with the measure $d\mathcal{H}^{n-1}$ from the disintegration in Theorem~\ref{thm:main1}) yields
\begin{align*}
0=&\int_{\widehat{\mathcal{S}}} \varphi(\hat{b}(y))\, \hat{D}(\hat{u}(\hat{b}(y)),y)\, d \mathcal{H}^{n-1}(y)
-\int_{\widehat{\mathcal{S}}} \varphi(\hat{a}(y))\, \hat{D}(\hat{u}(\hat{a}(y)),y)\, d \mathcal{H}^{n-1}(y)\\
&-\int_{\widehat{\mathcal{S}}} \int_{\hat{u}(\hat{a}(y))}^{\hat{u}(\hat{b}(y))}
\varphi\bigl(\widehat{\Phi}(t,y)\bigr)\,\partial_t \hat{D}(t, y)\, d t \, d \mathcal{H}^{n-1}(y)\\
&+\int_{\widehat{\mathcal{S}}} \int_{\hat{u}(\hat{a}(y))}^{\hat{u}(\hat{b}(y))}
(\operatorname{div} \hat{d})_{\mathrm{a.c.}}\bigl(\widehat{\Phi}(t,y)\bigr)\,\varphi\bigl(\widehat{\Phi}(t,y)\bigr)\,\hat{D}(t, y)\, d t \, d \mathcal{H}^{n-1}(y)\\
&+\int_{M} \varphi\, d\bigl((\operatorname{div}\hat{d})_{\mathrm{s}}\bigr).
\end{align*}
Moreover, since both $\hat{D}$ and $\partial_t \hat{D}$ are locally bounded, the dominated convergence theorem implies that the last identity also holds for bounded test functions that vanish outside a compact set and in a neighborhood of $K$, the set of initial points for $\hat{d}$. By the arbitrariness of $\varphi$, this yields \eqref{eq:continuity-equation} for $\mathcal{H}^n$-a.e.\ point in $\mathcal{K}$.

Recall that by our definition of $\hat{d}$, the singular part is concentrated on $\bigcup_{y \in K}\bigl(\hat{a}(y)\cup \hat{b}(y)\bigr)$ with respect to the ray defined by $\hat{u}$. Let us fix a cylinder $\mathcal{K}$ with base $\hat{Z}$. Define $\widehat{\Phi}^\pm$ to be the maps that associate to each point in $\hat{Z}$ its corresponding endpoints, i.e.\ the maps $\hat{Z}\ni z \mapsto \hat{a}(z)$ and $\hat{Z}\ni z \mapsto \hat{b}(z)$. It is then clear that the singular part is concentrated on
\[
\hat{D}\cdot\widehat{\Phi}^+_{\#}\,\mathcal{H}^{n-1}\!\llcorner \hat{Z}
-\hat{D}\cdot\widehat{\Phi}^-_{\#}\,\mathcal{H}^{n-1}\!\llcorner \hat{Z}.
\]
Hence the singular part of $\operatorname{div}\hat{d}$ vanishes on $\mathcal{K}'$. Therefore, we have
\[
-\int_{\hat{Z}}\int_{h^-}^{h^+} \varphi\bigl(\exp_y((t-u(y))\hat{d}(y))\bigr)\,\partial_t D(t,y)\,dt\, d\mathcal{H}^{n-1}(y)
\]
\[
+ \int_{\hat{Z}}\int_{h^-}^{h^+} (\operatorname{div}\hat{d})_{\mathrm{a.c.}}\bigl(\exp_y((t-u(y))\hat{d}(y))\bigr)\,\varphi\bigl(\exp_y((t-u(y))\hat{d}(y))\bigr)\,D(t,y)\,dt\, d\mathcal{H}^{n-1}(y)=0.
\]
Integrating by parts in the first term (with respect to $t$), we obtain
\begin{align*}
0={}&\int_{\hat{Z}}\bigl[\varphi\bigl(\exp_y((t-u(y))\hat{d}(y))\bigr)\,D(t,y)\bigr]_{t=h^-}^{t=h^+}\, d\mathcal{H}^{n-1}(y)\\
&+\int_{\hat{Z}}\int_{h^-}^{h^+} g\Bigl(\nabla \varphi\bigl(\exp_y((t-u(y))\hat{d}(y))\bigr),\,\hat{d}\bigl(\exp_y((t-u(y))\hat{d}(y))\bigr)\Bigr)\,D(t,y)\,dt\, d\mathcal{H}^{n-1}(y)\\
&+\int_{\hat{Z}}\int_{h^-}^{h^+} \varphi\bigl(\exp_y((t-u(y))\hat{d}(y))\bigr)\,(\operatorname{div}\hat{d})_{\mathrm{a.c.}}\bigl(\exp_y((t-u(y))\hat{d}(y))\bigr)\,D(t,y)\,dt\, d\mathcal{H}^{n-1}(y).
\end{align*}
If we choose test functions of the form $\varphi\,\chi_{\mathcal{K}'}$ with $\varphi\in C_c^\infty(M\setminus K)$ and $\mathcal{K}'\Subset\mathcal{K}$, the distributional Leibniz rule yields
\[
\nabla(\varphi\,\chi_{\mathcal{K}'})
=\chi_{\mathcal{K}'}\,\nabla\varphi+\varphi\,\nabla\chi_{\mathcal{K}'},
\]
where $\nabla\chi_{\mathcal{K}'}$ is the outward normal measure on $\partial\mathcal{K}'$, i.e.\ $\nabla\chi_{\mathcal{K}'}=\mathbf{n}_{\mathcal{K}'}\,\mathcal{H}^{n-1}\!\llcorner\partial\mathcal{K}'$.
Consequently, we obtain the Green--Gauss formula on $\mathcal{K}'$:
\[
\int_{\mathcal{K}'} g(\nabla\varphi,\hat d)\,d\mathcal{H}^n
=-\int_{\mathcal{K}'} \varphi\,(\operatorname{div}\hat d)_{\mathrm{a.c.}}\,d\mathcal{H}^n
+\int_{\partial\mathcal{K}'} \varphi\, g(\hat d,\mathbf{n}_{\mathcal{K}'})\, d\mathcal{H}^{n-1}.
\]
\end{proof}
\begin{remark}
From the Lagrangian perspective, we may regard $\Phi^t$ as a flow map that moves particles along the transport rays. The vector field $\nabla u$ is the velocity field and therefore satisfies the ODE
\[
\frac{d}{dt}\Phi^s(y)=\nabla u\bigl(\Phi^s(y)\bigr).
\]
Therefore, the pair $(c,\nabla u)$ solves the continuity equation uniquely~\cite{santambrogio2015optimal}. In fact, we will see in the next lemma that the absolutely continuous part is independent of the partition $\{\mathcal{K}\}$.
\end{remark}

We glue together the distributional divergence of $\hat{d}$, which coincides with $d$ on each cylinder; this formally yields a continuity equation defined on the $\mathcal{H}^n$-a.e. transport set.
\begin{lemma}
Formally, we glue together the absolutely continuous parts of the distributional divergence on $\mathcal{T}$ by setting
\[
(\operatorname{div} d)_{\mathrm{a.c.}}
  =\sum_{j}\bigl((\operatorname{div} d)_{\mathrm{a.c.}}\bigr)_j\,\chi_{\mathcal{K}_j}.
\]
Then, for any partition into cylinders as in Theorem~\ref{thm:main1}, with relative density $c$ and sections $\mathcal{S}$, one has
\[
\partial_t D(t,y)
-\Bigl[(\operatorname{div}d)_{\mathrm{a.c.}}\bigl(\exp_y((t-u(y)) d(y))\bigr)\Bigr]\,D(t,y)
=0
\quad \mathcal{H}^n\text{-a.e.\ on } z \in \mathcal{T}.
\]
where $z = \exp_y\left((t-u(y)\nabla u(y))\right)$.
\end{lemma}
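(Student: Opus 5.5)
The plan is to reduce the global statement to the local continuity equation already proved on a single cylinder, and then glue the local statements using countable additivity. First fix a partition of $\mathcal{T}$ (up to a $\operatorname{Vol}_g$-null set, using $\operatorname{Vol}_g(\mathcal{T}_{\mathfrak{e}}\setminus\mathcal{T})=0$) into countably many cylinders $\{\mathcal{K}_j\}$ with compact bases $\hat Z_j$, as in the proof of Theorem~\ref{thm:main1}. For each $j$, shrink slightly: choose $\varepsilon_j>0$ so that $\Phi^{h^-_j-\varepsilon_j}(\hat Z_j)$ still lies on the rays (possible, after a further countable refinement, because base points sit at positive distance from the endpoints $a,b$). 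This gives a set $K_j$ of initial points and the potential $\hat u_j$ with $\hat u_j|_{\mathcal{K}_j}=u|_{\mathcal{K}_j}$, and hence the field $\hat d_j=\nabla\hat u_j$, which equals $d$ on $\mathcal{K}_j$.

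By the previous lemma, the equation
\[
\partial_t \hat D_j - (\operatorname{div}\hat d_j)_{\mathrm{a.c.}}\,\hat D_j=0
\]
holds $\mathcal{H}^n$-a.e.\ on $\mathcal{K}_j$. Since $\hat{\mathcal{S}}$ and $\hat D_j$ agree with $\mathcal{S}$ and $D$ restricted to $\mathcal{K}_j$, this is exactly the claimed identity with $((\operatorname{div} d)_{\mathrm{a.c.}})_j:=(\operatorname{div}\hat d_j)_{\mathrm{a.c.}}\chi_{\mathcal{K}_j}$. The glued function is Borel, and the equation holds a.e.\ on $\bigcup_j\mathcal{K}_j$. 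The exceptional set is a countable union of null sets. Under the Borel isomorphism of the corollary on null sets, and via the disintegration of Theorem~\ref{thm:main1}, $\mathcal{H}^n$-a.e.\ $z\in\mathcal{T}$ corresponds to a.e.\ $(t,y)$ with $D(t,y)>0$, so the statement follows.

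The main obstacle is independence from the partition: one must check that the glued $(\operatorname{div} d)_{\mathrm{a.c.}}$ is well defined. Concretely, on an overlap $\mathcal{K}_j\cap\mathcal{K}'_i$ of two partitions, the a.c.\ parts obtained from different $\hat d$'s must coincide a.e.

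I would prove this locally. Take a sub-cylinder $\mathcal{K}''\Subset\mathcal{K}_j\cap\mathcal{K}'_i$, which can be obtained by intersecting bases and truncating $t$, and apply the Green--Gauss formula of the previous lemma for both fields. On $\mathcal{K}''$ both fields equal $d$, so the left-hand sides and the boundary terms coincide. Hence
\[
\int_{\mathcal{K}''}\varphi\,\bigl[(\operatorname{div}\hat d_j)_{\mathrm{a.c.}}-(\operatorname{div}\hat d'_i)_{\mathrm{a.c.}}\bigr]\,d\mathcal{H}^n=0
\]
for all $\varphi$. Sub-cylinders generate the Borel sets of the overlap up to null sets, so the two densities agree a.e.

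An alternative route is to use the continuity equation itself: $(\operatorname{div})_{\mathrm{a.c.}}=\partial_t\log D$ a.e. By Theorem~\ref{thm:main1} and uniqueness of disintegration along rays, $D$ is determined up to a $y$-dependent factor independent of $t$, so $\partial_t\log D$ is intrinsic. This second argument is the one I would try first, since it avoids boundary regularity of $\partial\mathcal{K}''$. It needs $D>0$ and local Lipschitz continuity in $t$, which come from the bounds on $\tilde\alpha$.
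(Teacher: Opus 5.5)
Your proposal is correct, and its skeleton is the one the paper intends. You refine so that each cylinder admits a backward extension $\Phi^{h^{-}-\varepsilon}(Z)$, and hence a field $\hat d_j$ agreeing with $d$ on $\mathcal{K}_j$. You then apply the preceding cylinder-wise continuity equation, identify $\hat D_j$ with $D$ there, and glue over the countable partition, using that the ray parametrization preserves null sets.

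The difference is that the paper gives no written proof of this lemma. The words ``for any partition'' rest only on the assertion, in the remark just before the lemma, that the absolutely continuous part is independent of $\{\mathcal{K}\}$. Your argument supplies that missing step, and it is the right route:
\begin{itemize}
\item On an overlap of two cylinders, compare the two disintegrations of $\operatorname{Vol}_g$, using the absolutely continuous pushforward between the two bases.
\item This gives $D'(t,y')=\lambda(y)\,D(t,y)$ for $y,y'$ on the same ray. The variable $t$ is the same in both, since it is fixed by the value of $u$.
\item Since $D>0$ and $D$ is locally Lipschitz in $t$, $\partial_t\log D$ is intrinsic, and so is the glued $(\operatorname{div} d)_{\mathrm{a.c.}}$.
\end{itemize}

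Your hesitation about the Green--Gauss comparison is justified. Bases produced by Lusin's theorem may have empty relative interior, so $\chi_{\mathcal{K}''}$ need not have finite perimeter. If you do use that route, read the boundary term as the slice term $\int_{\hat Z}[\varphi D]_{t=h^-}^{t=h^+}\,d\mathcal{H}^{n-1}$ that comes from integrating by parts along rays. That term depends only on $D$ on $\mathcal{K}''$, so it coincides for both fields.
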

Under the assumptions of this paper, $\operatorname{div}d$ is a finite sum of Radon measures; therefore, it is a Radon measure.

\begin{remark}[Open question]
The main theorem~\ref{thm:main1} yields a well-defined volume measure on the transport set $\mathcal{T}$ via the relative density $D(t,y)$ in the disintegration. It would be interesting to clarify how this volume is related to the optimal $L^1$-transport cost
\[
\operatorname{KP}[\pi]=\inf_{\pi\in\Pi(\mu,\nu)}\int_{M\times M} d(x,y)\,d\pi(x,y) = \sup_{u\in\operatorname{Lip_1}(M,d)} \int_M ud(\mu-\nu).
\] 
More precisely, can one choose a Kantorovich potential $u\in\operatorname{Lip}_1(M,d)$, that is a u attaining the supremum in $KP$ such that for all test functions $\phi$ we have 
$$
\int_{\mathcal{T}_e} \varphi(x) d \operatorname{Vol}_g(x)=\int_{\mathcal{S}}\left(\int_{u(b(y))}^{u(a(y))} \varphi\left(\exp _y[(t-u(y)) \nabla u(y)]\right) D(t, y) d t\right) d \mathcal{H}^{n-1}(y).
$$
This would be saying that the resulting decomposition identifies the transport density and expresses the total cost in terms of the induced volume on $\mathcal{T}$.
\end{remark}
\nocite{*} 
\clearpage
\addcontentsline{toc}{section}{References}
\bibliographystyle{amsalpha}
\bibliography{references} 
\noindent\textsc{Zhengyao Huang}, Department of Mathematics, Durham University, UK\\
\textit{Email address}: \texttt{phqv76@durham.ac.uk}
\end{document}